\newtheorem*{thmA}{Theorem A}%
\newtheorem*{thmB}{Theorem B}%
\newtheorem{Theorem}{Theorem}[section]
\newtheorem{B-W}[Theorem]{The Berger-Wang Formula}%
\newtheorem{que}{Question}
\newtheorem{Lemma}[Theorem]{Lemma}
\newtheorem{cor}[Theorem]{Corollary}
\theoremstyle{definition}%
\newtheorem{rem}[Theorem]{Remark}%
\newtheorem{defn}[Theorem]{Definition}
\newtheorem{defn*}[Theorem]{Definitions}%
\newtheorem{example}[Theorem]{\textbf{Example}}%
\numberwithin{equation}{section}
\newcommand{\bS}{\boldsymbol{S}}
\journal{Linear Algebra and its Applications}%
\begin{document}

\begin{frontmatter}


\tnotetext[label1]{Project was supported partly by National Natural Science Foundation of China (No.~11071112).}%

\title{A Gel'fand-type spectral radius formula and stability of linear constrained switching systems\tnoteref{label1}}


\author[]{Xiongping Dai}%

\ead{xpdai@nju.edu.cn}%

\address{Department of Mathematics, Nanjing University, Nanjing 210093, People's Republic of China}%

\begin{abstract}
Using ergodic theory, in this paper we present a Gel'fand-type
spectral radius formula which states that the joint
spectral radius is equal to the generalized spectral radius for a matrix multiplicative semigroup $\bS^+$ restricted to a subset that need not carry the algebraic structure of $\bS^+$. This generalizes the Berger-Wang formula. Using it as a tool,
we study the absolute exponential stability of a
linear switched system driven by a compact subshift of
the one-sided Markov shift associated to $\bS$.
\end{abstract}

\begin{keyword}
Joint/generalized spectral radius \sep Gel'fand-type
spectral-radius formula \sep linear switched system \sep
asymptotic stability

\medskip
\MSC[2010] 15B52 \sep 93D20 \sep 37N35
\end{keyword}

\end{frontmatter}


\section{Introduction}\label{sec1}%

In this paper, we study the Gel'fand-type spectral-radius formula and stability of a matrix multiplicative semigroup $\bS^+$ restricted to a subset that does not need to carry the algebraic structure of the semigroup $\bS^+$, using ergodic-theoretic and dynamical systems approaches.

\subsection{The Gel'fand-type formulae}\label{sec1.1}%
Let $d\ge1$ be an integer and $\mathcal{I}$ a metrizable topological space. We consider a continuous matrix-valued function
$\bS\colon\mathcal{I}\rightarrow\mathbb{C}^{d\times d};\,i\mapsto S_{i}$.
Let us denote by $\varSigma_{\!\mathcal{I}}^+$
the set of all the one-sided infinite switching signals $i(\cdot)\colon\mathbb{N}\rightarrow\mathcal{I}$ endowed with the standard infinite-product topology, where $\mathbb{N}=\{1,2,\ldots\}$. For simplicity, we write $i(n)=i_n$ for all $n\in\mathbb{N}$. Then in the state space $\mathbb{C}^d$, we define the linear, discrete-time, switched dynamical system $\bS_{i(\cdot)}$:
\begin{equation*}
x_n=S_{i_{n}}\cdots S_{i_1}x_0\qquad(
x_0\in\mathbb{C}^d,\; n\ge1),
\end{equation*}
for any switching signal $i(\cdot)=(i_n)_{n=1}^{+\infty}\in\varSigma_{\!\mathcal{I}}^+$.
For any word $w=(i_1,\ldots,i_n)\in\mathcal {I}^n=\overset{n\textrm{-time}}{\overbrace{\mathcal{I}\times\cdots\times\mathcal{I}}}$ of
length $n\ge 1$, simply write
$\bS_w=S_{i_n}\cdots S_{i_1}$
and let $\|\bS_w\|$ denote the operator norm of
the linear transformation $x\mapsto
\bS_wx$ induced by any preassigned vector norm
$\|\cdot\|$ on $\mathbb{C}^d$; that is to say,
$\|\bS_w\|=\sup_{x\in\mathbb{C}^d, \|x\|=1}\|\bS_wx\|$.

The \emph{joint spectral radius} of $\bS$ (free of constraints) is
introduced by G.-C.~Rota and G.~Strang in~\cite{RS} as follows:
\begin{equation*}
\hat{\rho}(\bS)=\limsup_{n\to+\infty}\left\{\sup_{w\in\mathcal
{I}^n}\sqrt[n]{\|\bS_w\|}\right\}\quad \left(~=\lim_{n\to+\infty}\left\{\sup_{w\in\mathcal
{I}^n}\sqrt[n]{\|\bS_w\|}\right\}\right).
\end{equation*}
Since
\begin{equation*}
\log\left(\sup_{w\in\mathcal
{I}^{\ell+m}}\|\bS_w\|\right)\le\log\left(\sup_{w\in\mathcal
{I}^m}\|\bS_w\|\right)+\log\left(\sup_{w\in\mathcal
{I}^\ell}\|\bS_w\|\right)
\end{equation*}
for all $\ell, m\ge 1$, i.e., the subadditivity holds, the above limit always exists.
On the other hand, the \emph{generalized spectral radius} of $\bS$ (free of constraints) is defined by
I.~Daubechies and J.C.~Lagarias in~\cite{DL} as
\begin{equation*}
\rho(\bS)=\limsup_{n\to+\infty}\left\{\sup_{w\in\mathcal
{I}^n}\sqrt[n]{\rho(\bS_w)}\right\},
\end{equation*}
where $\rho(A)$ denotes the usual spectral radius of the matrix
$A\in\mathbb{C}^{d\times d}$.

Then, the so-called generalized Gel'fand spectral-radius formula, due to M.A.~Berger and
Y.~Wang~\cite{BW} and conjectured by I.~Daubechies and J.C.~Lagarias~\cite{DL},
can be stated as follows:

\begin{B-W}[See \cite{BW}]\label{thm1.1}%
If $\bS=\{S_i\}_{i\in\mathcal{I}}$ is a bounded subset of $\mathbb{C}^{d\times d}$, then there holds the equality
$\rho(\bS)=\hat{\rho}(\bS)$.
\end{B-W}

This formula was proved by using different approaches,
for example, in~\cite{BW, El, SWP, CZ00, Bochi, Dai-JMAA}. Recently, this formula has been
generalized to sets of precompact linear operators constraint-free acting on
a Banach space by Ian D.~Morris in~\cite{Mor} using ergodic theory.

The above Gel'fand-type spectral-radius formula is an important tool in a
number of research areas, such as in the theory of control and
stability of unforced systems, see \cite{Bar, Koz, Gur, DHX-aut} for example; in coding
theory, see~\cite{MOS}; in wavelet regularity, see \cite{DL, DL92, HS,
Mae}; and in the study of numerical solutions to ordinary
differential equations, see, e.g., \cite{GZ}.

However, in many real-world situations, constraints on allowable switching signals often arise naturally as a result of physical requirements on a system. One often needs to consider
some switching constraints imposed by some kind of uncertainty about the model or
about environment in which the object operates, see \cite{Wir, LD06-1, LD06-2, LD07, BF11} and so on. Consider in the control theory, for example, a proper
subset $\varLambda$ of $\varSigma_{\!\mathcal{I}}^+$ as the
set of admissible switching signals, such as
\begin{equation*}
\varLambda=\varSigma_{\mathbb{A}}^+:=\left\{i(\cdot)=(i_n)_{n=1}^{+\infty}\in\varSigma_{\!\mathcal{I}}^+\,|\,
a_{i_{n}i_{n+1}}=1\ \forall n\ge 1\right\}
\end{equation*}
where $\mathcal{I}=\{1,\ldots,\kappa\}$ consists of finitely many letters and where $\mathbb{A}=(a_{\ell m})$ is a $\kappa\times\kappa$ matrix of zeros
and ones induced by a Markov transition matrix or a directed graph. A more general way to define $\varLambda$ is via a language, as shown, for example in \cite{ZBSB, HMU, LD07}.

So, it is natural and necessary to introduce the definition of Gel'fand-type spectral radius under some switching constraints.

Hereafter, if $\varLambda$ is a nonempty subset of $\varSigma_{\!\mathcal {I}}^+$, then $\bS_{\upharpoonright\!\varLambda}$ is identified with the family of systems $\bS_{i(\cdot)}$ over all switching signals $i(\cdot)\in\varLambda$, and called the switched system with constraint $\varLambda$.

\begin{defn*}
Let $\varLambda$ be a nonempty subset of
$\varSigma_{\!\mathcal {I}}^+$ as the
set of admissible switching signals. Define the \emph{joint spectral radius of $\bS_{\upharpoonright\!\varLambda}$} as
\begin{equation*}
\hat{\rho}(\bS_{\upharpoonright\!\varLambda})=\limsup_{n\to+\infty}\left\{\sup_{i(\cdot)\in\varLambda}\sqrt[n]{\|S_{i_n}\cdots S_{i_1}\|}\right\}.
\end{equation*}
The \emph{generalized spectral radius of $\bS_{\upharpoonright\!\varLambda}$} is defined as
\begin{equation*}
{\rho}(\bS_{\upharpoonright\!\varLambda})=\limsup_{n\to+\infty}\left\{\sup_{i(\cdot)\in\varLambda}\sqrt[n]{\rho(S_{i_n}\cdots S_{i_1})}\right\}.
\end{equation*}
\end{defn*}

We notice that if $\varLambda$ is \emph{invariant} by the natural one-sided Markov shift $\theta_+\colon i(\cdot)\mapsto i(\cdot+1)$; that is, $i(\cdot+1)$ belongs to $\varLambda$ for any $i(\cdot)\in\varLambda$, then from the subadditivity, there follows that
$\hat{\rho}(\bS_{\upharpoonright\!\varLambda})$ is well
defined in the sense that
\begin{equation*}
\hat{\rho}(\bS_{\upharpoonright\!\varLambda})=\lim_{n\to+\infty}\left\{\sup_{i(\cdot)\in\varLambda}\sqrt[n]{\|S_{i_n}\cdots S_{i_1}\|}\right\}.
\end{equation*}
It is easily seen that there holds the inequality
${\rho}(\bS_{\upharpoonright\!\varLambda})\le\hat{\rho}(\bS_{\upharpoonright\!\varLambda})$.
Clearly, $\hat{\rho}(\bS_{\upharpoonright\!\varLambda
})=\hat{\rho}(\bS)$ and
${\rho}(\bS_{\upharpoonright\!\varLambda
})={\rho}(\bS)$ for the special free-constraint case
$\varLambda=\varSigma_{\!\mathcal{I}}^+$, if $\bS$ is bounded in $\mathbb{C}^{d\times d}$.

Based on the recent work of Ian D.~Morris~\cite{Mor} (see Theorem~\ref{thm2.6R} below), in this paper, we present the
following Gel'fand-type spectral-radius formula under switching constraints:

\begin{thmA}[Spectral-radius formula with constraints]
Let $\bS\colon\mathcal{I}\rightarrow\mathbb{C}^{d\times
d};\ i\mapsto S_i$ be continuous in $i\in\mathcal{I}$ where $\mathcal{I}$ is a
metric space, and assume
$\varLambda\subset\varSigma_{\!\mathcal {I}}^+$ is an invariant compact
set of the one-sided Markov shift
\begin{equation*}
\theta_+\colon\varSigma_{\!\mathcal {I}}^+\rightarrow\varSigma_{\!\mathcal
{I}}^+;\quad i(\cdot)=(i_n)_{n=1}^{+\infty}\mapsto i(\cdot+1)=(i_{n+1})_{n=1}^{+\infty}.
\end{equation*}
Then there holds the
equality
${\rho}(\bS_{\upharpoonright\!\varLambda})=\hat{\rho}(\bS_{\upharpoonright\!\varLambda})$.
\end{thmA}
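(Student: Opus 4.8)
The inequality $\rho(\bS_{\upharpoonright\!\varLambda})\le\hat\rho(\bS_{\upharpoonright\!\varLambda})$ is free, since $\rho(A)\le\|A\|$ for the operator norm of any matrix $A$; so the whole content is the reverse inequality $\hat\rho(\bS_{\upharpoonright\!\varLambda})\le\rho(\bS_{\upharpoonright\!\varLambda})$. The plan is to recast $\bS_{\upharpoonright\!\varLambda}$ as a continuous linear cocycle over the compact dynamical system $(\varLambda,\theta_+|_{\varLambda})$ — note that $\theta_+(\varLambda)\subseteq\varLambda$ by hypothesis, and no invertibility of $\theta_+$ is needed anywhere — and then to invoke Morris's cocycle version of the Berger--Wang formula, which the paper records as Theorem~\ref{thm2.6R}. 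Concretely, define $\mathcal{A}\colon\varLambda\to\mathbb{C}^{d\times d}$ by $\mathcal{A}(i(\cdot))=S_{i_1}$. Since the first-coordinate projection $\varLambda\ni i(\cdot)\mapsto i_1\in\mathcal{I}$ is continuous for the product topology and $\bS$ is continuous on $\mathcal{I}$, the map $\mathcal{A}$ is continuous; as $\varLambda$ is compact, $\{i_1:i(\cdot)\in\varLambda\}$ is a compact subset of $\mathcal{I}$, whence $\mathcal{A}(\varLambda)=\{S_{i_1}:i(\cdot)\in\varLambda\}$ is a compact, in particular bounded, subset of $\mathbb{C}^{d\times d}$. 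This is the only point at which the possible non-compactness of the index space $\mathcal{I}$ could cause trouble, and continuity of $\bS$ together with compactness of $\varLambda$ disposes of it.

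The second step is to record the dictionary between cocycle iterates and switching products: for $\xi=i(\cdot)\in\varLambda$ and $n\ge1$,
\[
\mathcal{A}^{(n)}(\xi):=\mathcal{A}\bigl(\theta_+^{\,n-1}\xi\bigr)\cdots\mathcal{A}(\theta_+\xi)\,\mathcal{A}(\xi)=S_{i_n}\cdots S_{i_1}=\bS_{(i_1,\dots,i_n)},
\]
so that $\sup_{\xi\in\varLambda}\|\mathcal{A}^{(n)}(\xi)\|=\sup_{i(\cdot)\in\varLambda}\|S_{i_n}\cdots S_{i_1}\|$ and $\sup_{\xi\in\varLambda}\rho(\mathcal{A}^{(n)}(\xi))=\sup_{i(\cdot)\in\varLambda}\rho(S_{i_n}\cdots S_{i_1})$ for every $n$. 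Taking $n$-th roots and letting $n\to\infty$ (the relevant limit for the norms being genuine by the subadditivity noted above, which is exactly where $\theta_+$-invariance of $\varLambda$ enters), the joint spectral radius of the cocycle $\mathcal{A}$ over $(\varLambda,\theta_+)$ equals $\hat\rho(\bS_{\upharpoonright\!\varLambda})$ and its generalized spectral radius equals $\rho(\bS_{\upharpoonright\!\varLambda})$. Applying Theorem~\ref{thm2.6R} to the cocycle $\mathcal{A}$ then yields $\hat\rho(\bS_{\upharpoonright\!\varLambda})=\rho(\bS_{\upharpoonright\!\varLambda})$, which is the claim.

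If one wants an argument that does not merely quote Theorem~\ref{thm2.6R}, the remaining work is the classical ergodic-theoretic one. Krylov--Bogolyubov furnishes $\theta_+$-invariant Borel probability measures on $\varLambda$, and by Kingman's subadditive ergodic theorem the quantity $\chi(\mu):=\lim_{n\to\infty}\tfrac1n\int_{\varLambda}\log\|\mathcal{A}^{(n)}\|\,d\mu$ (which also equals $\inf_{n\ge1}\tfrac1n\int_{\varLambda}\log\|\mathcal{A}^{(n)}\|\,d\mu$) is well defined for each such $\mu$. A routine compactness argument — form empirical measures along near-maximizing finite orbit segments, pass to a weak-$*$ accumulation point, and use upper semicontinuity of $\mu\mapsto\chi(\mu)$ on the compact simplex of invariant measures — gives $\log\hat\rho(\bS_{\upharpoonright\!\varLambda})=\max_{\mu}\chi(\mu)$, the maximum being attained at some ergodic $\mu^{*}$ by ergodic decomposition. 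The genuinely hard step, and the core of both Berger--Wang and of Morris's extension, is to bound $\chi(\mu^{*})$ above by $\log\rho(\bS_{\upharpoonright\!\varLambda})$: one must convert exponential growth of the norms $\|\mathcal{A}^{(n)}(\xi)\|$ along $\mu^{*}$-generic orbits into exponential growth of the spectral radii $\rho(\mathcal{A}^{(n)}(\xi))$ along suitable finite orbit segments, which is achieved with the Oseledets multiplicative ergodic theorem (to isolate a dominated Oseledets subspace) followed by a closing/periodic-approximation argument carried out inside the subshift $\varLambda$, so that the finite products remain of the form $\bS_{(i_1,\dots,i_n)}$ with $i(\cdot)\in\varLambda$. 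I expect this last step to be the main obstacle; since it is precisely what Theorem~\ref{thm2.6R} packages, in the write-up I would simply appeal to that theorem.
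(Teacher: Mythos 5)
Your proposal is correct and follows essentially the same route as the paper: recast $\bS_{\upharpoonright\!\varLambda}$ as a continuous linear cocycle over the compact system $(\varLambda,{\theta_+}_{\upharpoonright\!\varLambda})$, use Morris's theorem (Theorem~\ref{thm2.6R}) together with Kingman's theorem (Theorem~\ref{thm2.2R}) to bound the growth rate at each ergodic measure by $\log\rho(\bS_{\upharpoonright\!\varLambda})$, and then pass to the supremum over $\varLambda$ by the semi-uniform subadditive ergodic theorem. One caution: Theorem~\ref{thm2.6R} as recorded in the paper is only a $\mu$-a.e.\ pointwise statement, so your second paragraph's direct appeal to it does not by itself give the equality of the two suprema; the variational identity $\log\hat{\rho}(\bS_{\upharpoonright\!\varLambda})=\max_{\mu}\chi(\mu)$ that you sketch in your third paragraph (the Schreiber/Sturman--Stark result, the paper's Theorem~\ref{thm2.3R} and Lemma~\ref{lem2.4R}) is an essential, not optional, step of the argument.
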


Let
$\bS_{\upharpoonright\!\varLambda}^+$ be the set of all
product matrices $S_{i_n}\cdots S_{i_1}$ where $n\ge1$ and
$i(\cdot)=(i_n)_{n=1}^{+\infty}\in\varLambda$. A technical problem
is, for the constrained case
$\varLambda\subsetneq\varSigma_{\!\mathcal {I}}^+$, that
$\bS_{\upharpoonright\!\varLambda}^+$ does not need to carry the algebraic
structure of a semigroup; otherwise, \cite[Theorem~B]{Bochi} works and implies Theorem~A in our context. The compactness and $\theta_+$-invariance of $\varLambda$ both are needed for our discussion of using ergodic theory.

We note that \cite[Theorem~7.3]{Wir} contains a ``Gel'fand-type formula" with constraints which is for continuous time and in a special case,
using Lyapunov function. Our theorem will be proved in
Section~\ref{sec2} based on a recent theorem of Ian D.~Morris in
\cite{Mor}.

Theorem~A is a generalization of the Berger-Wang formula. In fact, from
it we could obtain concisely the Berger-Wang formula as follows.

\begin{proof}[\textbf{Proof of the Berger-Wang formula}]
Let $\{S_i\,|\,i\in\mathcal{I}\}\subset\mathbb{C}^{d\times d}$ be an arbitrary bounded set.
Write $\mathds{I}=\mathrm{Cl}_{\mathbb{C}^{d\times d}}(\{S_i\,|\,i\in\mathcal{I}\})$, the closure of the set
$\{S_i\colon i\in\mathcal{I}\}$ in $\mathbb{C}^{d\times d}$. Then, $\mathds{I}$ is
compact in $\mathbb{C}^{d\times d}$, and the function
$\mathds{S}\colon\mathds{I}\rightarrow\mathbb{C}^{d\times
d}$, defined by $\mathbf{i}\mapsto \mathds{S}_\mathbf{i}$ where $\mathds{S}_\mathbf{i}=\mathbf{i}\ \forall \mathbf{i}\in\mathds{I}$,
is continuous in $\mathbf{i}\in\mathds{I}$. Since there holds that
\begin{equation*}
\sup_{\mathbf{w}\in\mathds{I}^n}\sqrt[n]{\|\mathds{S}_\mathbf{w}\|}=\sup_{w\in\mathcal{I}^n}\sqrt[n]{\|\bS_w\|}
\quad \textrm{and}\quad
\sup_{\mathbf{w}\in\mathds{I}^n}\sqrt[n]{\rho\left(\mathds{S}_\mathbf{w}\right)}=\sup_{w\in\mathcal{I}^n}\sqrt[n]{\rho(\bS_w)}
\end{equation*}
for all $n\ge 1$ from the fact $\mathds{I}^n=\mathrm{Cl}_{\mathbb{C}^{d\times d}}(\{\bS_w\,|\,w\in\mathcal{I}^n\})$, we can obtain that
$\hat{\rho}\left(\bS\right)=\hat{\rho}\left(\mathds{S}\right)$ and $\rho\left(\bS\right)=\rho\left(\mathds{S}\right)$.
So, applying Theorem~A in the case $\varLambda=\varSigma_\mathds{I}^+$, we have got that $\hat{\rho}\left(\mathds{S}\right)=\rho\left(\mathds{S}\right)$.
This completes the proof of the Berger-Wang formula (Theorem~\ref{thm1.1}).
\end{proof}

In addition, we define the Lyapunov exponent associated to an initial state $x_0\in\mathbb{C}^d\setminus\{\mathbf{0}\}$ and a switching signal $i(\cdot)=(i_n)_{n=1}^{+\infty}$ by
\begin{equation*}
\chi(x_0,\bS_{i(\cdot)})=\limsup_{n\to+\infty}\frac{1}{n}\log\|S_{i_n}\cdots S_{i_1}x_0\|.
\end{equation*}
It is easily seen that
$\hat{\rho}\left(\bS\right)\ge\exp\chi(x_0,\bS_{i(\cdot)})$
for all $i(\cdot)\in\varLambda$ and all $x_0\in\mathbb{C}^d$.
However, we will prove that $\hat{\rho}\left(\bS\right)$ might
be achieved by some optimal pair $(x_0,i(\cdot))\in\mathbb{C}^d\times\varLambda$;
see Corollary~\ref{cor2.7R} below, which generalizes a corresponding result in \cite{Bar} in the free-constraints case.

Recall for any given $i(\cdot)\in\varSigma_{\!\mathcal{I}}^+$ that $\bS$ is said to be $i(\cdot)$-exponentially stable, provided that there exists $\mathbf{c}\ge1$ and $\chi<0$ such that
\begin{equation*}
\|S_{i_n}\cdots S_{i_1}x_0\|\le \mathbf{c}\|x_0\|\exp(n\chi)\quad\forall x_0\in\mathbb{C}^d \textrm{and }n\ge1.
\end{equation*}
This is equivalent to
\begin{equation*}
\chi(\bS_{i(\cdot)}):=\limsup_{n\to+\infty}\frac{1}{n}\log\|S_{i_n}\cdots S_{i_1}\|<0.
\end{equation*}
Moreover, this is also equivalent to $\chi(x_0,\bS_{i(\cdot)})<0$ for all $x_0\in\mathbb{C}^d\setminus\{\mathbf{0}\}$.
Further, $\bS$ is called to be \emph{uniformly} $i(\cdot)$-exponentially stable, provided that there exists $C\ge1$ and $\chi<0$ such that
\begin{equation*}
\|S_{i_{m+\ell}}\cdots S_{i_\ell}\cdots S_{i_1}x_0\|\le C\|S_{i_\ell}\cdots S_{i_1}x_0\|\exp(m\chi)\quad\forall x_0\in\mathbb{C}^d \textrm{and }m\ge1,
\end{equation*}
uniformly for $\ell\ge0$. This is equivalent to that $\bS$ is exponentially stable over the closure of the orbit $\{i(\cdot+m)\colon m=0,1,2,\ldots\}$ in $\varSigma_{\!\mathcal{I}}^+$.

From \cite{DHX-aut} together with K.G.~Hare \textit{et al}.~\cite{HMST}, one can construct an explicit counterexample to show that the $i(\cdot)$-exponential stability is essentially weaker than the uniform $i(\cdot)$-exponential stability of $\bS$.
\subsection{Stability criteria under switching-path constraints}\label{sec1.2}%
As pointed out in D.~Liberzon and A.S.~Morse~\cite{LM99}, there are three benchmark problems for switched systems: stabilization under arbitrary switching signals, stabilization under a switching path constraint, and construction of stabilizing switching signals.
To the second problem, as another result of our spectral-radius formula, in the second part of
this paper, we give the following criteria of the absolutely asymptotic
stability for a linear system obeying switching constraints, which
will be proved in Section~\ref{sec3}.

\begin{thmB}
Let $\bS\colon\mathcal{I}\rightarrow\mathbb{C}^{d\times
d}$ be continuous and bounded with $\rho(\bS)=1$ and assume
$\varLambda\subset\varSigma_{\!\mathcal {I}}^+$ is an invariant compact
set of the one-sided Markov shift
$\theta_+\colon\varSigma_{\!\mathcal{I}}^+\rightarrow\varSigma_{\!\mathcal{I}}^+$.
Then, the following conditions are mutually equivalent:
\begin{description}
\item[(a)] $\bS$ is ``$\varLambda$-absolutely
asymptotically stable", i.e.,
\begin{equation*}
S_{i_n}\cdots S_{i_1}\to \mathbf{0}_{d\times d}\; \textrm{ as }n\to+\infty
\quad\forall i(\cdot)\in\varLambda,
\end{equation*}
where $\mathbf{0}_{d\times d}$ is the origin of $\mathbb{C}^{d\times d}$.

\item[(b)] The generalized spectral radius $\rho(\bS_{\upharpoonright\!\varLambda})<1$.

\item[(c)] There exists a constant $0<\gamma<1$ and an integer $N\ge1$ such that
\begin{equation*}
\rho(S_{i_n}\cdots S_{i_1})\le\gamma\quad \forall n\ge N \textrm{ and } i(\cdot)\in\varLambda.
\end{equation*}
\end{description}
\end{thmB}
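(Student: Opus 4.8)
The plan is to establish the cycle of equivalences by separating the two ``soft'' implications from the two that carry the real content. First I would record $(b)\Rightarrow(a)$ and $(b)\Rightarrow(c)$. For $(b)\Rightarrow(a)$: by Theorem~A one has $\hat{\rho}(\bS_{\upharpoonright\!\varLambda})=\rho(\bS_{\upharpoonright\!\varLambda})<1$, and since $\varLambda$ is $\theta_+$-invariant the sequence $a_n:=\sup_{i(\cdot)\in\varLambda}\|S_{i_n}\cdots S_{i_1}\|$ is submultiplicative, so $a_n^{1/n}$ converges (not merely along a $\limsup$) to $\hat{\rho}(\bS_{\upharpoonright\!\varLambda})$; hence $a_n\le\mu^n$ for all large $n$ with $\rho(\bS_{\upharpoonright\!\varLambda})<\mu<1$, which forces $S_{i_n}\cdots S_{i_1}\to\mathbf{0}_{d\times d}$ uniformly in $i(\cdot)\in\varLambda$, a fortiori $(a)$. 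For $(b)\Rightarrow(c)$: by definition $\limsup_{n}\bigl\{\sup_{i(\cdot)\in\varLambda}\rho(S_{i_n}\cdots S_{i_1})^{1/n}\bigr\}=\rho(\bS_{\upharpoonright\!\varLambda})<1$, so there are $N_0\ge1$ and $\mu<1$ with $\rho(S_{i_n}\cdots S_{i_1})\le\mu^n$ for all $n\ge N_0$ and all $i(\cdot)\in\varLambda$; then $(c)$ holds with $N=N_0$ and $\gamma=\mu^{N_0}$, because $n\ge N$ gives $\mu^n\le\mu^{N_0}<1$.

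The substance of the theorem is to close the cycle, namely $(a)\Rightarrow(b)$ and $(c)\Rightarrow(b)$, and I would prove both by contraposition. So assume $(b)$ is false, i.e. $\rho(\bS_{\upharpoonright\!\varLambda})\ge1$. Since $\bS$ is bounded, the Berger-Wang formula gives $\rho(\bS)=\hat{\rho}(\bS)$; combining this with Theorem~A, with the monotonicity $\hat{\rho}(\bS_{\upharpoonright\!\varLambda})\le\hat{\rho}(\bS)$, and with the hypothesis $\rho(\bS)=1$, we obtain $1\le\rho(\bS_{\upharpoonright\!\varLambda})=\hat{\rho}(\bS_{\upharpoonright\!\varLambda})\le\hat{\rho}(\bS)=\rho(\bS)=1$, whence $\rho(\bS_{\upharpoonright\!\varLambda})=\hat{\rho}(\bS_{\upharpoonright\!\varLambda})=1$. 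Now invoke Corollary~\ref{cor2.7R}: there is an optimal pair $(x_0,i^{\star}(\cdot))\in(\mathbb{C}^d\setminus\{\mathbf{0}\})\times\varLambda$ realizing the joint spectral radius, and since that radius equals $1$ the corresponding extremal trajectory does not collapse, i.e. $\inf_{n\ge1}\|S_{i^{\star}_n}\cdots S_{i^{\star}_1}x_0\|>0$ (in the extremal, Barabanov-type gauge furnished by the ergodic construction its norm is in fact constant along the orbit). In particular $S_{i^{\star}_n}\cdots S_{i^{\star}_1}\not\to\mathbf{0}_{d\times d}$, so $(a)$ fails; this gives $(a)\Rightarrow(b)$.

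For $(c)\Rightarrow(b)$ I would run the same contraposition and, from $\rho(\bS_{\upharpoonright\!\varLambda})=\hat{\rho}(\bS_{\upharpoonright\!\varLambda})=1$, extract a switching signal $i^{\star}(\cdot)\in\varLambda$ along which the spectral radius of the truncated cocycle violates the uniform bound in $(c)$. Using the ergodic input behind Theorem~A --- an extremal $\theta_+$-invariant ergodic measure $\mu_{\star}$ on $\varLambda$ with Lyapunov exponent $\Lambda(\mu_{\star})=\log\hat{\rho}(\bS_{\upharpoonright\!\varLambda})=0$, together with the fact that along a $\mu_{\star}$-typical signal the spectral radius of $S_{i_n}\cdots S_{i_1}$ keeps pace with its norm --- one converts a near-return $\theta_+^{n_k}(i^{\star}(\cdot))\approx i^{\star}(\cdot)$ in the compact space $\varLambda$ (Poincar\'e recurrence) into a product $S_{i^{\star}_{n_k}}\cdots S_{i^{\star}_1}$ whose spectral radius is close to $e^{\Lambda(\mu_{\star})n_k}=1$; taking $n_k\ge N$ then contradicts $\rho(S_{i^{\star}_{n_k}}\cdots S_{i^{\star}_1})\le\gamma<1$. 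I expect this last point to be the main obstacle: condition $(c)$ only yields, by a purely subadditive estimate, $\rho(\bS_{\upharpoonright\!\varLambda})\le\limsup_n\gamma^{1/n}=1$, so the strict inequality in $(b)$ cannot be reached without exploiting the fine extremal/recurrence structure provided by the ergodic-theoretic proof of Theorem~A, rather than any elementary norm comparison.
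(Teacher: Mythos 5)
Your two ``soft'' implications are fine, but both of the implications carrying the real content have genuine gaps, and in each case the paper uses a tool you have not replaced.

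In your contrapositive proof of $\mathrm{(a)}\Rightarrow\mathrm{(b)}$, the decisive step is the claim that an optimal pair $(x_0,i^{\star}(\cdot))$ from Corollary~\ref{cor2.7R} with joint spectral radius $1$ satisfies $\inf_{n}\|S_{i^{\star}_n}\cdots S_{i^{\star}_1}x_0\|>0$. This does not follow: the corollary only gives $\limsup_n\frac{1}{n}\log\|S_{i^{\star}_n}\cdots S_{i^{\star}_1}x_0\|=0$, which is perfectly compatible with $\|S_{i^{\star}_n}\cdots S_{i^{\star}_1}x_0\|\to0$ subexponentially (say like $e^{-\sqrt{n}}$), so the trajectory may still collapse and $\mathrm{(a)}$ may still hold. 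Your parenthetical appeal to an extremal ``Barabanov-type gauge'' in which the norm is constant along the orbit is exactly what is unavailable here: Remark~\ref{rem1.4} of the paper stresses that extremal norms need not exist in the constrained setting because $\bS_{\upharpoonright\!\varLambda}^+$ is not a semigroup. The paper instead obtains $\mathrm{(a)}\Rightarrow\mathrm{(b)}$ from the Fenichel uniformity theorem (Lemma~\ref{lem3.3}): on a compact invariant $\varLambda$, pointwise asymptotic stability upgrades to uniform exponential stability, and then Lemma~\ref{lem3.2} and Corollary~\ref{cor2.8R} give $\rho(\bS_{\upharpoonright\!\varLambda})<1$. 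Some such uniformity-from-compactness input is indispensable and is missing from your argument.

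For $\mathrm{(c)}\Rightarrow\mathrm{(b)}$ you correctly diagnose the obstacle --- the subadditive estimate only yields $\rho(\bS_{\upharpoonright\!\varLambda})\le1$ --- but the recurrence argument you sketch does not overcome it. Morris's theorem gives, for $\pmb{\mu}_*$-a.e.\ $i(\cdot)$, that $\rho(S_{i_n}\cdots S_{i_1})^{1/n}\to1$ along a subsequence; this is consistent with $\rho(S_{i_n}\cdots S_{i_1})\le\gamma<1$ for every $n$ (take the spectral radius constantly equal to $\gamma$), so ``spectral radius close to $e^{0\cdot n_k}=1$'' in the $n$-th-root sense never contradicts $\mathrm{(c)}$. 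What is needed is a lower bound on $\rho(S_{i_n}\cdots S_{i_1})$ itself, and the paper produces one by an entirely different mechanism, split into two cases. If $\bS_{\upharpoonright\!\varLambda}^+$ is bounded, the compactness argument of Lemma~\ref{lem3.1} extracts $C_\ell=S_{i_{j_\ell}}\cdots S_{i_1}\to C\neq\mathbf{0}$ and connecting blocks $B_{\ell_k}\to B\in\mathrm{Cl}(\bS_{\upharpoonright\!\varLambda}^+)$ with $BC=C$, so $B$ is the identity on $\mathrm{Im}\,C$ and $\rho(B)\ge1$, contradicting $\mathrm{(c)}$. If $\bS_{\upharpoonright\!\varLambda}^+$ is unbounded, the hypothesis $\rho(\bS)=1$ (via Berger--Wang, $\hat{\rho}(\bS)=1$) licenses Elsner's reduction lemma (Lemma~\ref{lem3.5}), and one concludes by induction on the dimension $d$ applied to the diagonal blocks, using Theorem~A to pass back from the generalized to the joint spectral radius. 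Neither the limit-operator trick nor the dimension reduction appears in your proposal, and without them the cycle of implications does not close.
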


The claim $\mathrm{(a)}\Leftrightarrow\mathrm{(b)}$ still holds without the assumption $\rho(\bS)=1$, by using the Fenichel uniformity theorem (Lemma~\ref{lem3.3} below) and Theorem~A; see Lemmas~\ref{lem3.2} and \ref{lem3.3} below.
Here the compactness of $\varLambda$ is important for the proof of Theorem~B presented in this paper. Let us see a simple counterexample as follows:

\begin{example}\label{example1.3}%
Let $\mathcal{I}=\{0,1\}, \varLambda=\varSigma_{\!\mathcal{I}}^+\setminus\{(0,0,0,\ldots), (1,1,1,\ldots)\}$ and let
$\bS\colon\mathcal{I}\rightarrow\mathbb{C}^{2\times 2}$ be defined by
\begin{equation*}
0\mapsto S_0=\left[\begin{matrix}1&0\\ 0&0\end{matrix}\right],\quad 1\mapsto S_1=\left[\begin{matrix}0&0\\ 0&1\end{matrix}\right].
\end{equation*}
It is easily seen that $\rho(\bS)=1$ and $\bS$ is $\varLambda$-absolutely
asymptotically stable. However, $\rho(\bS_{\upharpoonright\!\varLambda})=1$. Moreover, for any $N\ge1$, one can find some $i(\cdot)=(i_n)_{n=1}^{+\infty}\in\varLambda$ such that
$\rho(S_{i_N}\cdots S_{i_1})=1$.
Note here that $\varLambda$ is $\theta_+$-invariant, but it is an open and noncompact subset of $\varSigma_{\!\mathcal{I}}^+$.
\end{example}

\begin{rem}\label{rem1.4}%
To any $\varepsilon>0$, there always exists a norm
$|\pmb{|}\cdot\pmb{|}|_\varepsilon$ on $\mathbb{C}^d$ such that
\begin{equation*}
|\pmb{|}S_i\pmb{|}|_\varepsilon\le\hat{\rho}(\bS)+\varepsilon\quad\forall i\in\mathcal{I},
\end{equation*}
for example in \cite{RS}, also see \cite{El, OR97, SWP} for much shorter proofs. This implies that
\begin{equation*}
\hat{\rho}(\bS)=\inf_{\|\cdot\|\in\mathcal{N}}\left\{\sup_{i\in\mathcal{I}}\|S_i\|\right\},
\end{equation*}
where $\mathcal{N}$ denotes the set of all possible vector norms on $\mathbb{C}^d$.

So, whenever $\hat{\rho}(\bS)<1$
one always can pick a pre-extremal norm $|\pmb{|}\cdot\pmb{|}|$ on
$\mathbb{C}^d$ so that there exists a constant $\hat{\gamma}$ with
\begin{equation*}
|\pmb{|}S_i\pmb{|}|\le\hat{\gamma}<1   \quad\forall i\in\mathcal{I}.\leqno{(\star)}
\end{equation*}
Thus, $\|S_{i_n}\cdots S_{i_1}\|\to 0$ as $n\to\infty$ uniformly for
$i(\cdot)\in\varSigma_{\!\mathcal {I}}^+$
whenever $\hat{\rho}(\bS)<1$. However, this inequality $(\star)$ is not, in general, the case
for the constrained case $\hat{\rho}(\bS_{\upharpoonright\!\varLambda})<1$ when
$\varLambda\not=\varSigma_{\!\mathcal {I}}^+$ because of the lack of the semigroup structure of $\bS_{\upharpoonright\!\varLambda}^+$ as mentioned before. In fact, the $\varLambda$-stability of $\bS$ cannot imply the stability of every subsystems.
This point causes an essential
difference between the case free of any switching constraints and
one obeying switching constraints.
\end{rem}

\begin{rem}\label{rem1.5}%
For the case free of constraints, there holds the following identity:
\begin{equation*}
\rho(\bS)=\sup_{n\ge1}\left\{\sup_{w\in\mathcal{I}^n}\sqrt[n]{\rho(\bS_w)}\right\},\leqno{(*)}
\end{equation*}
which is very important; this is because it simply implies the continuity of $\rho(\bS)$ with respect to $\bS\colon\mathcal{I}\rightarrow\mathbb{C}^{d\times d}$ under the $\mathrm{C}^0$-topology \cite{HS}.
For example, see~\cite[Lemma~3.1]{DL} and \cite[Remark in Section~1]{Bochi}. Moreover, this is
used in~\cite{LW,BT,SWP}. Here we present an other proof for this. Since for any $\varepsilon>0$ one can pick out a norm
$|\pmb{|}\cdot\pmb{|}|_\varepsilon$ on $\mathbb{C}^d$ such that
$|\pmb{|}S_i\pmb{|}|_\varepsilon\le\hat{\rho}(\bS)+\varepsilon$ for all $i\in\mathcal {I}$, as mentioned in
Remark~\ref{rem1.4}. So, from the Berger-Wang formula, it
follows that
\begin{equation*}
\sqrt[n]{\rho(\bS_w)}\le \sqrt[n]{\hat{\rho}(\bS_w)}
\le \rho(\bS)+\varepsilon \qquad\forall w\in\mathcal {I}^n\textrm{ and }
n\ge 1.
\end{equation*}
Thus, $\sup_{w\in \mathcal {I}^n}\sqrt[n]{\rho(\bS_w)}\le\rho(\bS)$ for any $n\ge 1$ and so $\sup_{n\ge1}\left\{\sup_{w\in \mathcal {I}^n}\sqrt[n]{\rho(\bS_w)}\right\}=\rho(\bS)$.

In our situation, however, the above $(*)$ does not need to hold restricted to $\varLambda$ because of the lack of condition $(\star)$. We consider an explicit constrained system. Let $\bS$ be defined as in Example~\ref{example1.3} and let
\begin{equation*}
\varLambda=\{i^\prime(\cdot)=(0,1,0,1,0,1,\ldots),\quad i^{\prime\prime}(\cdot)=(1,0,1,0,1,0,\ldots)\}.
\end{equation*}
Since $\theta_+(i^\prime(\cdot))=i^{\prime\prime}(\cdot)$ and $\theta_+(i^{\prime\prime}(\cdot))=i^\prime(\cdot)$, $\varLambda$ is a $\theta_+$-invariant compact subset of $\varSigma_{\!\mathcal{I}}^+$. Clearly,
\begin{equation*}
\rho(\bS_{\upharpoonright\!\varLambda})=0\lneqq\sup_{n\ge1}\left\{\max_{i(\cdot)\in\varLambda}\sqrt[n]{\rho(S_{i_n}\cdots S_{i_1})}\right\}=1.
\end{equation*}
This shows that the dynamics behavior of a constrained system is sometimes very different from that of a system free of any constraints.
\end{rem}

Similar to the proof of the Berger-Wang formula presented before, it follows easily from Theorem~B
that if $\rho(\bS)<1$ then
$\bS$, free of
constraints, is absolutely exponentially stable. So, this theorem
extends Brayton-Tong \cite[Theorem~4.1]{BT80}, Barabanov~\cite{Bar},
Daubechies-Lagarias~\cite[Theorem~4.1]{DL},
Gurvits~\cite[Theorem~2.3]{Gur} and
Shih-Wu-Pang~\cite[Theorem~1]{SWP} for a discrete-time linear switched system that
is free of any switching constraints to one which obeys some
switching constraints.

Finally, the paper ends with some questions related closely to Theorems A
and B for us to further study in Section~\ref{sec4}.

\section{The Gel'fand-type spectral-radius formula obeying constraints}\label{sec2}%

In this section, we will devote our attention to proving Theorem~A which asserts
a Gel'fand-type spectral-radius formula of a set of matrices obeying some
switching constraints, using ergodic-theoretic approaches.

\subsection{Some ergodic-theoretic results}\label{sec2.1}%
Let $T\colon\varOmega\rightarrow\varOmega$ be a continuous
transformation of a compact topological space $\varOmega$. Let
$\mathscr{B}_{\!\varOmega}$ be the Borel $\sigma$-field of the
space $\varOmega$, which is generated by all open sets of the
topology space $\varOmega$.

\begin{defn}[See \cite{NS}] \label{def2.1}
A probability measure $\mu$ on the Borel measurable space
$(\varOmega,\mathscr{B}_{\!\varOmega})$ is said to be
\emph{$T$-invariant}, write as $\mu\in\mathcal
{M}_{\textit{inv}}(\varOmega,T)$, if $\mu=\mu\circ T^{-1}$, i.e.
$\mu(B)=\mu(T^{-1}(B))$ for all $B\in\mathscr{B}_{\!\varOmega}$. A
$T$-invariant probability measure $\mu$ is called
\emph{$T$-ergodic}, write as $\mu\in\mathcal
{M}_{\textit{erg}}(\varOmega,T)$, provided that for
$B\in\mathscr{B}_{\!\varOmega}$, $\mu\left((B\setminus
T^{-1}(B))\cup(T^{-1}(B)\setminus B)\right)=0$ implies $\mu(B)=1$ or $0$.
\end{defn}

To prove Theorem~A, we need several ergodic-theoretic lemmas. The first is the standard Kingman subadditive ergodic theorem.

\begin{Theorem}[See \cite{Kin}]\label{thm2.2R}
Let $\langle f_n\rangle_{n=1}^{+\infty}\colon\varOmega
\rightarrow\mathbb{R}\cup\{-\infty\}$ be a sequence of upper-bounded Borel measurable
functions such that $f_{m+n}(\omega) \le f_n(T^m(\omega))+f_m(\omega)$
for every $\omega\in\varOmega$ and
any $m,n\ge 1$. Then, for any $\mu\in\mathcal
{M}_{\textit{erg}}(\varOmega,T)$, it holds that
\begin{equation*}
\lim_{n\to+\infty}\frac{1}{n}\int_{\varOmega}f_n(\omega)\,d\mu(\omega)=\inf_{n\ge1}\frac{1}{n}\int_{\varOmega}f_n(\omega)\,d\mu(\omega)=\lim_{n\to+\infty}\frac{1}{n}f_n(\omega)
\end{equation*}
for $\mu$-a.s. $\omega\in\varOmega$.
\end{Theorem}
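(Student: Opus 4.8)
The plan is to prove the classical Kingman subadditive ergodic theorem in the usual way: reduce the first two equalities to Fekete's subadditive lemma, then obtain the pointwise statement by bounding $\limsup$ and $\liminf$ of $\tfrac1n f_n$ separately — the upper bound via Birkhoff's pointwise ergodic theorem (the additive special case) applied to $T^m$, and the lower bound via a Katznelson--Weiss stopping-time decomposition.

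\emph{Setup and elementary reductions.} Put $a_n:=\int_\varOmega f_n\,d\mu\in[-\infty,+\infty)$. Integrating the hypothesis $f_{m+n}\le f_m+f_n\circ T^m$ over $\varOmega$ and using $\mu=\mu\circ T^{-1}$ gives $a_{m+n}\le a_m+a_n$, so Fekete's lemma yields that $\gamma:=\lim_n a_n/n=\inf_n a_n/n$ exists in $[-\infty,+\infty)$; this is the first equality of the statement and reduces matters to proving $\tfrac1n f_n(\omega)\to\gamma$ for $\mu$-a.e.\ $\omega$. Replacing $f_n$ by $\max\{f_n,-Cn\}$ (again subadditive, bounded above, with finite integrals) and letting $C\to+\infty$ by monotone convergence, we may assume $\gamma>-\infty$. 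Iterating the hypothesis gives $f_n(\omega)\le\sum_{k=0}^{n-1}f_1(T^k\omega)$, so with $c_1:=\sup_\varOmega f_1<\infty$ we have $\tfrac1n f_n\le c_1$; hence $f_*:=\liminf_n\tfrac1n f_n$ and $f^*:=\limsup_n\tfrac1n f_n$ are bounded above. From the one-step inequality $f_n\le f_1+f_{n-1}\circ T$ one checks $f_*\le f_*\circ T$ and $f^*\le f^*\circ T$ pointwise; since $\mu$ is $T$-invariant and these functions are bounded above, truncating from below and integrating forces $f_*=f_*\circ T$ and $f^*=f^*\circ T$ $\mu$-a.e., whence by ergodicity $f_*\equiv\gamma_*$ and $f^*\equiv\gamma^*$ $\mu$-a.e.\ for constants $-\infty\le\gamma_*\le\gamma^*\le c_1$. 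It remains to show $\gamma^*\le\gamma$ and $\gamma\le\gamma_*$.

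\emph{Upper bound $\gamma^*\le\gamma$.} Fix $m\ge1$ and write $n=qm+r$ with $0\le r<m$. Iterating subadditivity over blocks of length $m$,
\[
f_n(\omega)\le\sum_{j=0}^{q-1}f_m\bigl(T^{jm}\omega\bigr)+f_r\bigl(T^{qm}\omega\bigr)\le\sum_{j=0}^{q-1}f_m\bigl(T^{jm}\omega\bigr)+M_m,\qquad M_m:=\max_{0\le r<m}\sup_\varOmega f_r<\infty .
\]
Since $f_m\in L^1(\mu)$ (it is bounded above and $\int f_m=a_m\ge m\gamma>-\infty$), Birkhoff's ergodic theorem for the $\mu$-preserving map $T^m$ shows $\tfrac1q\sum_{j=0}^{q-1}f_m(T^{jm}\omega)$ converges $\mu$-a.e.\ to a function $g_m$ with $\int_\varOmega g_m\,d\mu=a_m$. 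Dividing the display by $n$ and letting $n\to\infty$ (so $q/n\to1/m$ and $M_m/n\to0$) gives $f^*(\omega)\le\tfrac1m g_m(\omega)$ $\mu$-a.e.; integrating gives $\gamma^*\le a_m/m$, and taking the infimum over $m$ gives $\gamma^*\le\gamma$.

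\emph{Lower bound $\gamma\le\gamma_*$; this is the main obstacle.} Fix $\varepsilon>0$ and set $\tau(\omega):=\min\{n\ge1:\ f_n(\omega)\le n(\gamma_*+\varepsilon)\}$, which is $\mu$-a.e.\ finite because $\liminf_n\tfrac1n f_n=\gamma_*$ a.e.; choose $N$ with $\mu\{\tau>N\}<\varepsilon$. For each $\omega$ and each $m\ge1$ run a greedy decomposition of $\{0,1,\dots,m-1\}$ into consecutive blocks: from the current index $k$, if $\tau(T^k\omega)\le N$ and $k+\tau(T^k\omega)\le m$ take the ``good'' block $[k,k+\tau(T^k\omega))$, on which $f_{\tau(T^k\omega)}(T^k\omega)\le\tau(T^k\omega)(\gamma_*+\varepsilon)$ by definition of $\tau$; otherwise take the ``bad'' singleton $\{k\}$, advancing by $1$. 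Every bad singleton at position $k$ either has $\tau(T^k\omega)>N$ or satisfies $k>m-N$, so the number of bad singletons is at most $\sum_{k=0}^{m-1}\mathbf 1_{\{\tau>N\}}(T^k\omega)+N$. Summing $f_m(\omega)$ along the decomposition by subadditivity and bounding $f_1(T^k\omega)\le c_1$ on bad singletons,
\[
f_m(\omega)\le(\gamma_*+\varepsilon)m+(c_1-\gamma_*-\varepsilon)^+\Bigl(\sum_{k=0}^{m-1}\mathbf 1_{\{\tau>N\}}(T^k\omega)+N\Bigr).
\]
Integrating over $\varOmega$, using $\int\mathbf 1_{\{\tau>N\}}(T^k\omega)\,d\mu=\mu\{\tau>N\}<\varepsilon$, dividing by $m$ and letting $m\to\infty$ gives $\gamma\le(\gamma_*+\varepsilon)+(c_1-\gamma_*-\varepsilon)^+\varepsilon$; letting $\varepsilon\to0$ — and noting $\gamma_*>-\infty$, which follows by running the same argument with an arbitrary real $L$ in place of $\gamma_*+\varepsilon$ — gives $\gamma\le\gamma_*$. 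Combining, $\gamma\le\gamma_*\le\gamma^*\le\gamma$, so $f_*=f^*=\gamma$ $\mu$-a.e., i.e.\ $\lim_n\tfrac1n f_n(\omega)=\gamma$ for $\mu$-a.e.\ $\omega$; together with the Fekete step this is the asserted chain of equalities. The one genuinely delicate point is the bookkeeping in the greedy decomposition — tracking the total length of good blocks, the ``bad'' positions, and the final incomplete block of length $<N$ — everything else being routine once Birkhoff's theorem is available.
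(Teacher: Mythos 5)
The paper does not prove this statement at all: it is quoted verbatim as Kingman's subadditive ergodic theorem with the citation to \cite{Kin}, so there is no in-paper argument to compare against. Your proof is a correct, self-contained rendition of the standard Katznelson--Weiss proof: Fekete's lemma for the first equality, the block decomposition $n=qm+r$ plus Birkhoff for $T^m$ for the upper bound, and the greedy stopping-time decomposition for the lower bound. You correctly sidestep the classical trap in the upper bound --- $\mu$ need not be $T^m$-ergodic, so the Birkhoff limit $g_m$ is not constant, and you rightly integrate $f^*\le\tfrac1m g_m$ rather than equate $g_m$ with $a_m/m$ pointwise. Two steps are compressed but repairable in the stated way: the reduction to $\gamma>-\infty$ via truncation (in the case $\gamma>-\infty$ no truncation is actually needed, since $a_n\ge n\gamma$ already forces $f_n\in L^1$; the truncation is only used to deduce $\limsup_n\tfrac1n f_n=-\infty$ when $\gamma=-\infty$ from the finite case), and the elimination of $\gamma_*=-\infty$ by running the stopping-time argument with an arbitrary level $L$. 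The only cosmetic point is the convention $f_0\equiv 0$ needed for the $r=0$ case in $M_m$. Given that the paper treats this as a black box, your writeup is a legitimate and complete substitute for the citation.
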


As usual, one can introduce a natural topology for $\mathbb{R}\cup\{-\infty\}$ under which $[0,+\infty)$ is homeomorphic to $\mathbb{R}\cup\{-\infty\}$ by a strictly increasing continuous function from $\mathbb{R}\cup\{-\infty\}$ onto $[0,+\infty)$ with $-\infty\mapsto0$.
The second lemma needed is the semi-uniform subadditive ergodic
theorem, independently due to S.\,J.~Schreiber~\cite{Sch} and R.~Sturman and
J.~Stark~\cite{SS}, which could be stated as follows:

\begin{Theorem}[See \cite{Sch,SS}]\label{thm2.3R}
Let $\langle f_n\rangle_{n=1}^{+\infty}\colon\varOmega
\rightarrow\mathbb{R}\cup\{-\infty\}$ be a sequence of continuous
functions such that $f_{\ell+m}(\omega) \le
f_\ell(T^m(\omega))+f_m(\omega)$ for every $\omega\in\varOmega$ and
any $\ell,m\ge 1$. If there is a constant $\pmb{\alpha}$ such that
\begin{equation*}
\lim_{n\to+\infty}\frac{1}{n}\int_{\varOmega}f_n(\omega)\,d\mu(\omega)<\pmb{\alpha}\quad\forall \mu\in\mathcal{M}_{\textit{erg}}(\varOmega,T),
\end{equation*}
then there exists an $N\ge 1$ such that for any $\ell\ge N$,
$\sup_{\omega\in\varOmega}\frac{1}{\ell}f_\ell(\omega)<\pmb{\alpha}$.
\end{Theorem}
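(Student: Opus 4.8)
The plan is to argue by contradiction, combining a weak-$*$ compactness / empirical-measure argument with the Kingman theorem (Theorem~\ref{thm2.2R}) and the ergodic decomposition, so that the desired \emph{uniform} (topological) bound is extracted from the \emph{integral} hypothesis over all ergodic measures. As preliminaries: each $f_n$ is continuous on the compact space $\varOmega$ into $\mathbb{R}\cup\{-\infty\}$, hence bounded above, so for any Borel probability measure $\mu$ the integral $\int_\varOmega f_n\,d\mu\in[-\infty,+\infty)$ is well defined, and $\mu\mapsto\int_\varOmega f_n\,d\mu$ is upper semicontinuous on the weak-$*$ compact space $\mathcal{M}(\varOmega)$ of probability measures, being the decreasing limit of the continuous bounded functionals $\mu\mapsto\int\max\{f_n,-M\}\,d\mu$ as $M\to+\infty$. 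Moreover, for $\mu\in\mathcal{M}_{\textit{inv}}(\varOmega,T)$ the sequence $n\mapsto\int f_n\,d\mu$ is subadditive (by $T$-invariance together with $f_{\ell+m}\le f_\ell\circ T^m+f_m$), so $\varLambda_*(\mu):=\lim_{n}\frac1n\int f_n\,d\mu=\inf_{n}\frac1n\int f_n\,d\mu$ exists in $[-\infty,+\infty)$ for every such $\mu$.

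Suppose the conclusion fails. Since each $f_n$ is upper semicontinuous on the compact $\varOmega$ it attains its supremum, so there are $\ell_k\uparrow+\infty$ and $\omega_k\in\varOmega$ with $f_{\ell_k}(\omega_k)\ge\pmb\alpha\,\ell_k$. Form the empirical measures $\mu_k:=\frac1{\ell_k}\sum_{i=0}^{\ell_k-1}\delta_{T^i\omega_k}$; after passing to a subsequence, $\mu_k\rightharpoonup\mu$ for some $\mu\in\mathcal{M}(\varOmega)$, and the standard telescoping estimate $|\int g\circ T\,d\mu_k-\int g\,d\mu_k|\le 2\|g\|_\infty/\ell_k$ (for continuous $g$) forces $\mu\in\mathcal{M}_{\textit{inv}}(\varOmega,T)$. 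Fix $n\ge1$. Peeling off blocks of length $n$ from each of the $n$ starting phases $s=0,\dots,n-1$ and iterating subadditivity gives, for $\ell_k\ge 2n$,
\begin{equation*}
n\,f_{\ell_k}(\omega_k)\le\sum_{i=0}^{\ell_k-n}f_n(T^i\omega_k)+2nC,\qquad C:=\max_{1\le j\le n}\sup_{\varOmega}f_j<+\infty,
\end{equation*}
where finiteness of $f_{\ell_k}(\omega_k)$ together with subadditivity forces every term $f_n(T^i\omega_k)$ with $i\le\ell_k-n$ to be finite. Hence the measures $\mu_k':=\frac1{\ell_k-n+1}\sum_{i=0}^{\ell_k-n}\delta_{T^i\omega_k}$ still converge weak-$*$ to $\mu$ and satisfy $\int f_n\,d\mu_k'\ge\pmb\alpha n-o(1)$, so upper semicontinuity of $\mu\mapsto\int f_n\,d\mu$ yields $\int f_n\,d\mu\ge\limsup_k\int f_n\,d\mu_k'\ge\pmb\alpha n$. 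As $n$ is arbitrary, $\varLambda_*(\mu)\ge\pmb\alpha$.

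Finally, take the ergodic decomposition $\mu=\int_{\mathcal{M}_{\textit{erg}}(\varOmega,T)}\nu\,d\tau(\nu)$. By the Kingman theorem (Theorem~\ref{thm2.2R}), $\frac1n\int f_n\,d\nu\to\varLambda_*(\nu)$ for each ergodic $\nu$, while $\frac1n\int f_n\,d\nu\le\int f_1\,d\nu\le\sup_\varOmega f_1$ uniformly in $n$ and $\nu$; hence, using the barycenter identity and the reverse Fatou lemma,
\begin{equation*}
\pmb\alpha\le\varLambda_*(\mu)=\lim_{n}\int_{\mathcal{M}_{\textit{erg}}}\Bigl(\tfrac1n\int f_n\,d\nu\Bigr)\,d\tau(\nu)\le\int_{\mathcal{M}_{\textit{erg}}}\varLambda_*(\nu)\,d\tau(\nu).
\end{equation*}
But the hypothesis says $\varLambda_*(\nu)<\pmb\alpha$ for \emph{every} ergodic $\nu$, so $\pmb\alpha-\varLambda_*(\cdot)$ is strictly positive on a set of full $\tau$-measure, whence $\int\varLambda_*(\nu)\,d\tau(\nu)<\pmb\alpha$ — a contradiction. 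Therefore the required $N$ exists.

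The main obstacles are: organizing the block-decomposition estimate so that the uncontrolled boundary contributions stay $O(n)$ (hence negligible after dividing by $\ell_k$) and so that the value $-\infty$ of the $f_n$ cannot corrupt the empirical averages (handled via the forced finiteness of $f_n(T^i\omega_k)$ for $i\le\ell_k-n$ and the truncation used to establish upper semicontinuity); and justifying the interchange of limit and integral in the ergodic decomposition (handled via the uniform upper bound $\frac1n\int f_n\,d\nu\le\sup_\varOmega f_1$ and the reverse Fatou lemma). The conceptual core is that weak-$*$ compactness of $\mathcal{M}(\varOmega)$ converts estimates valid at every ergodic measure into a single uniform topological estimate.
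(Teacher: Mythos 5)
The paper does not prove Theorem~\ref{thm2.3R}: it is imported verbatim from Schreiber \cite{Sch} and Sturman--Stark \cite{SS}, with a pointer to \cite{Dai10} for a short proof, so there is no in-paper argument to compare against. Your proposal is a correct reconstruction of the standard proof from those references: contradiction, empirical measures along the orbits of near-extremal points, a weak-$*$ limit which is invariant, the phase-averaged block decomposition to show $\inf_n\frac1n\int f_n\,d\mu\ge\pmb\alpha$ for that limit, and ergodic decomposition plus reverse Fatou to contradict the hypothesis. The one step worth checking carefully is the claimed inequality $n f_{\ell_k}(\omega_k)\le\sum_{i=0}^{\ell_k-n}f_n(T^i\omega_k)+2nC$: writing $\ell_k-s=q_sn+r_s$ for each phase $s$, the largest block index used is $\ell_k-n-r_s$, which is exactly the largest integer $\le\ell_k-n$ congruent to $s$ modulo $n$, so the union over $s$ of the block indices is precisely $\{0,\dots,\ell_k-n\}$ with no omissions and no repetitions; the inequality therefore holds as you state it (with the cosmetic correction $C\mapsto\max\{0,C\}$ to cover the absent boundary terms when $s=0$ or $r_s=0$). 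Your handling of the $-\infty$ values (forced finiteness of the interior terms, truncation to get upper semicontinuity of $\mu\mapsto\int f_n\,d\mu$) is exactly what is needed to extend the real-valued argument of \cite{SS} to the extended-real-valued setting the paper uses. The only hypothesis you use beyond what is literally stated is sequential weak-$*$ compactness of $\mathcal{M}(\varOmega)$, which requires $\varOmega$ compact metrizable rather than merely compact; this matches the paper's intended application ($\varOmega=\varLambda\subset\varSigma_{\!\mathcal I}^+$) and the setting of \cite{Sch,SS}, so it is not a genuine gap.
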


See \cite{Dai10} for an elementary and short proof of the above semi-uniformity theorem.
Next, we put
\begin{equation*}
{\chi}(\langle f_n\rangle_1^{\infty})=\lim_{n\to+\infty}\left\{\sup_{\omega\in
\varOmega}\frac{1}{n}f_n(\omega)\right\}\quad\textrm{and}\quad \chi(\mu,\langle f_n\rangle_1^{\infty})=\inf_{\ell\ge
1}\frac{1}{\ell}\int_{\varOmega}f_\ell(\omega)\,d\mu(\omega).
\end{equation*}
Clearly, ${\chi}(\langle f_n\rangle_1^{\infty})\le\max_{\omega\in\varOmega}f_1(\omega)<+\infty$ by the subadditivity and the continuity of $f_n(\omega)$ in $\omega\in\varOmega$.

As a result of Theorem~\ref{thm2.3R}, we can simply obtain the following version of Theorem~\ref{thm2.3R}.

\begin{Lemma}\label{lem2.4R}
Let $\langle
f_n\rangle_1^{+\infty}\colon\varOmega\rightarrow\mathbb{R}\cup\{-\infty\}$ be
be a $T$-subadditive sequence of continuous
functions. Then
\begin{equation*}
{\chi}(\langle
f_n\rangle_1^{\infty})=\max_{\mu\in\mathcal
{M}_{\textit{erg}}(\varOmega,T)}\chi(\mu,\langle f_n\rangle_1^{\infty}).
\end{equation*}
\end{Lemma}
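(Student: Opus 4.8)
The plan is to deduce Lemma~\ref{lem2.4R} from the semi-uniform subadditive ergodic theorem (Theorem~\ref{thm2.3R}) together with the Kingman theorem (Theorem~\ref{thm2.2R}), proving the two inequalities separately. First I would establish the easy direction $\chi(\langle f_n\rangle_1^\infty)\ge\chi(\mu,\langle f_n\rangle_1^\infty)$ for every $\mu\in\mathcal M_{\textit{inv}}(\varOmega,T)$: for each fixed $\ell\ge1$ one has $\sup_{\omega}\frac1\ell f_\ell(\omega)\ge\frac1\ell\int_\varOmega f_\ell\,d\mu$ trivially, and taking the infimum over $\ell$ on the right and recalling that subadditivity makes $\chi(\langle f_n\rangle_1^\infty)=\inf_{\ell\ge1}\sup_\omega\frac1\ell f_\ell(\omega)$ (Fekete) gives $\chi(\langle f_n\rangle_1^\infty)\ge\chi(\mu,\langle f_n\rangle_1^\infty)$; in particular $\chi(\langle f_n\rangle_1^\infty)\ge\sup_{\mu\in\mathcal M_{\textit{erg}}}\chi(\mu,\langle f_n\rangle_1^\infty)$.

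For the reverse inequality I would argue by contradiction. Suppose $\sup_{\mu\in\mathcal M_{\textit{erg}}(\varOmega,T)}\chi(\mu,\langle f_n\rangle_1^\infty)<\chi(\langle f_n\rangle_1^\infty)$. Pick any constant $\pmb\alpha$ strictly between these two quantities. By Kingman's theorem, for every ergodic $\mu$ we have $\lim_{n}\frac1n\int_\varOmega f_n\,d\mu=\inf_n\frac1n\int_\varOmega f_n\,d\mu=\chi(\mu,\langle f_n\rangle_1^\infty)<\pmb\alpha$, so the hypothesis of Theorem~\ref{thm2.3R} is satisfied. That theorem then yields an $N\ge1$ with $\sup_{\omega\in\varOmega}\frac1N f_N(\omega)<\pmb\alpha$, hence $\chi(\langle f_n\rangle_1^\infty)=\inf_{\ell\ge1}\sup_\omega\frac1\ell f_\ell(\omega)\le\sup_\omega\frac1N f_N(\omega)<\pmb\alpha$, contradicting the choice of $\pmb\alpha$. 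Therefore $\sup_{\mu\in\mathcal M_{\textit{erg}}}\chi(\mu,\langle f_n\rangle_1^\infty)\ge\chi(\langle f_n\rangle_1^\infty)$, and combined with the first direction the two are equal.

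It remains to upgrade the supremum to a maximum, i.e.\ to show the value $\chi(\langle f_n\rangle_1^\infty)$ is actually attained by some ergodic measure. Here I would use compactness: $\mathcal M_{\textit{inv}}(\varOmega,T)$ is a nonempty, convex, weak-$*$ compact subset of the dual of $C(\varOmega)$, and for each fixed $\ell$ the map $\mu\mapsto\frac1\ell\int_\varOmega f_\ell\,d\mu$ is affine and weak-$*$ continuous (using $f_\ell\in C(\varOmega)$, with the convention turning $\mathbb R\cup\{-\infty\}$ into $[0,\infty)$ so that continuity and boundedness are genuine). Thus $\mu\mapsto\chi(\mu,\langle f_n\rangle_1^\infty)=\inf_\ell\frac1\ell\int f_\ell\,d\mu$ is an infimum of affine weak-$*$ continuous functions, hence upper semi-continuous and concave on the compact set $\mathcal M_{\textit{inv}}(\varOmega,T)$, so it attains its maximum there at some $\mu_\star$. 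By concavity/affineness the extreme points — which by the ergodic decomposition are exactly the ergodic measures — realize the same maximal value, or alternatively one invokes the ergodic decomposition directly: writing $\mu_\star=\int\nu\,d\tau(\nu)$ over ergodic $\nu$, subadditivity and Kingman give $\chi(\mu_\star,\langle f_n\rangle_1^\infty)=\int\chi(\nu,\langle f_n\rangle_1^\infty)\,d\tau(\nu)\le\sup_\nu\chi(\nu,\langle f_n\rangle_1^\infty)$, so some ergodic $\nu$ attains the value. I expect the only mildly delicate point to be handling the possible value $-\infty$ of the $f_n$ and the associated identifications needed to keep weak-$*$ continuity and the ergodic decomposition statements literally correct; once the stated topological convention on $\mathbb R\cup\{-\infty\}$ is invoked, this is routine, and the substantive content is entirely carried by Theorems~\ref{thm2.2R} and~\ref{thm2.3R}.
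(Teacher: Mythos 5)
Your proof is correct, and its engine --- the contradiction via the semi-uniform subadditive ergodic theorem (Theorem~\ref{thm2.3R}) --- is exactly the one the paper uses. The genuine difference is in how attainment of the maximum is obtained. You first prove the identity with a supremum (choosing $\pmb{\alpha}$ strictly between $\sup_{\mu}\chi(\mu,\langle f_n\rangle_1^{\infty})$ and $\chi(\langle f_n\rangle_1^{\infty})$) and then upgrade the supremum to a maximum by a separate argument using weak-$*$ compactness of $\mathcal{M}_{\textit{inv}}(\varOmega,T)$, semi-continuity of $\mu\mapsto\chi(\mu,\langle f_n\rangle_1^{\infty})$, and the ergodic decomposition. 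The paper skips this entire third stage by running the same contradiction with $\pmb{\alpha}=\chi(\langle f_n\rangle_1^{\infty})$ itself: given the easy inequality, the negation of ``some ergodic $\mu$ attains $\pmb{\alpha}$'' is precisely the hypothesis ``$\chi(\mu,\langle f_n\rangle_1^{\infty})<\pmb{\alpha}$ for all ergodic $\mu$'' of Theorem~\ref{thm2.3R}, whose conclusion $\sup_{\omega}\frac{1}{N}f_N(\omega)<\pmb{\alpha}$ immediately contradicts $\chi(\langle f_n\rangle_1^{\infty})=\inf_{\ell}\sup_{\omega}\frac{1}{\ell}f_\ell(\omega)$; so attainment comes for free and no compactness of the space of measures is needed. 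Your extra stage is valid but contains one imprecision worth fixing: when $f_\ell$ takes the value $-\infty$, the map $\mu\mapsto\int_{\varOmega}f_\ell\,d\mu$ is \emph{not} weak-$*$ continuous (the homeomorphism of $\mathbb{R}\cup\{-\infty\}$ with $[0,+\infty)$ does not commute with integration), only upper semi-continuous; fortunately upper semi-continuity is all you need to attain a maximum on a compact set, so the argument survives. Your easy direction via Fekete's identity is slightly more elementary than the paper's appeal to Kingman and is perfectly fine.
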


\begin{proof}
Let $\pmb{\alpha}={\chi}(\langle f_n\rangle_1^{\infty})$. It is easy to see $\pmb{\alpha}\ge\chi(\mu,\langle f_n\rangle_1^{\infty})$ from Theorem~\ref{thm2.2R}. To prove the statement, suppose, by contradiction, that $\chi(\mu,\langle f_n\rangle_1^{\infty})<\pmb{\alpha}$ for all $\mu\in\mathcal
{M}_{\textit{erg}}(\varOmega,T)$. Then from Theorem~\ref{thm2.3R}, it follows that there exists an $N\ge1$ such that $\sup_{\omega\in\varOmega}\frac{1}{N}f_N(\omega)<\pmb{\alpha}$. Since $\varOmega$ is compact and $f_N$ is continuous, one can find some constant $\alpha^\prime<\pmb{\alpha}$ such that $\frac{1}{N}f_N(\omega)\le\alpha^\prime$ for all $\omega\in\varOmega$. Combining this with the subadditivity of $\langle
f_n\rangle_1^{+\infty}$ implies that ${\chi}(\langle f_n\rangle_1^{\infty})\le\alpha^\prime$, a contradiction. This proves Lemma~\ref{lem2.4R}.
\end{proof}

We notice here that the compactness of $\varOmega$ is important for
the statements of Theorem~\ref{thm2.3R} and Lemma~\ref{lem2.4R}, but not necessary for Theorem~\ref{thm2.2R}.

We call the numbers ${\chi}(\langle f_n\rangle_1^{\infty})$ and
$\chi(\mu,\langle f_n\rangle_1^{\infty})$, defined above, the \emph{joint growth rate} and \emph{growth rate at $\mu$}, of
the subadditive sequence $\langle f_n\rangle_1^{\infty}$, respectively. In addition, put
\begin{equation*}
\chi(\omega,\langle
f_n\rangle_1^{\infty})=\limsup_{n\to+\infty}\frac{1}{n}f_n(\omega).
\end{equation*}
Then from Theorem~\ref{thm2.2R}, it follows that
\begin{equation*}
\chi(\omega,\langle f_n\rangle_1^{\infty})=\chi(\mu,\langle
f_n\rangle_1^{\infty})  \qquad \mu\textrm{-a.s.}\ \omega\in\varOmega.
\end{equation*}
So, for any $T$-subadditive sequence $\langle f_n\rangle_1^{\infty}$ as in
Theorem~\ref{thm2.3R}, by Lemma~\ref{lem2.4R} we have
\begin{equation*}
{\chi}(\langle
f_n\rangle_1^{\infty})=\max_{\omega\in\varOmega}{\chi}(\omega,\langle
f_n\rangle_1^{\infty}).
\end{equation*}
Thus, we can obtain the following optimization result for the subadditive
function sequence $\langle f_n(\omega)\rangle_1^{\infty}$ given as in Theorem~\ref{thm2.3R}.

\begin{Lemma}\label{lem2.5R}
Let $\langle f_n\rangle_1^\infty$ be arbitrary given as in
Theorem~\ref{thm2.3R}. Then there can be found some $\pmb{\mu}_*\in\mathcal
{M}_{\textit{erg}}(\varOmega,T)$ such that ${\chi}(\langle
f_n\rangle_1^{\infty})=\chi(\pmb{\mu}_*,\langle f_n\rangle_1^{\infty})$. This also
implies that $\chi(\langle f_n\rangle_1^{\infty})=\chi(\omega,\langle
f_n\rangle_1^{\infty})$ for $\pmb{\mu}_*$-a.s. $\omega\in\varOmega$.
\end{Lemma}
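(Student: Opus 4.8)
The plan is to read the statement essentially off Lemma~\ref{lem2.4R} together with Kingman's theorem (Theorem~\ref{thm2.2R}); no genuinely new argument is needed. First I would note that $\mathcal{M}_{\textit{inv}}(\varOmega,T)\not=\emptyset$ because $\varOmega$ is compact and $T$ is continuous (Krylov--Bogolyubov), and, via the ergodic decomposition, $\mathcal{M}_{\textit{erg}}(\varOmega,T)\not=\emptyset$ as well; this is what gives meaning to the ``$\max$'' appearing in Lemma~\ref{lem2.4R}.

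For the first assertion, Lemma~\ref{lem2.4R} already states that $\chi(\langle f_n\rangle_1^\infty)=\max_{\mu\in\mathcal{M}_{\textit{erg}}(\varOmega,T)}\chi(\mu,\langle f_n\rangle_1^\infty)$, i.e.\ the supremum on the right-hand side is attained, so one simply takes $\pmb{\mu}_*\in\mathcal{M}_{\textit{erg}}(\varOmega,T)$ to be a measure realizing that maximum. If one prefers to see it directly: by Theorem~\ref{thm2.2R} one has $\chi(\mu,\langle f_n\rangle_1^\infty)\le\chi(\langle f_n\rangle_1^\infty)$ for every ergodic $\mu$, while if this inequality were strict for all ergodic $\mu$, then Theorem~\ref{thm2.3R}, applied with $\pmb{\alpha}=\chi(\langle f_n\rangle_1^\infty)$, together with the compactness of $\varOmega$ and the continuity of $f_N$, would yield $\chi(\langle f_n\rangle_1^\infty)<\chi(\langle f_n\rangle_1^\infty)$ --- the very contradiction used to prove Lemma~\ref{lem2.4R}. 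Either way we obtain $\pmb{\mu}_*$ with $\chi(\langle f_n\rangle_1^\infty)=\chi(\pmb{\mu}_*,\langle f_n\rangle_1^\infty)$.

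For the second assertion I would apply Kingman's theorem (Theorem~\ref{thm2.2R}) to the measure $\pmb{\mu}_*$: for $\pmb{\mu}_*$-almost every $\omega$ the limit $\lim_{n\to+\infty}\tfrac1n f_n(\omega)$ exists and equals $\inf_{\ell\ge1}\tfrac1\ell\int_\varOmega f_\ell(\omega)\,d\pmb{\mu}_*(\omega)=\chi(\pmb{\mu}_*,\langle f_n\rangle_1^\infty)$. Hence $\chi(\omega,\langle f_n\rangle_1^\infty)=\limsup_{n\to+\infty}\tfrac1n f_n(\omega)=\chi(\pmb{\mu}_*,\langle f_n\rangle_1^\infty)=\chi(\langle f_n\rangle_1^\infty)$ for $\pmb{\mu}_*$-a.e.\ $\omega$, as claimed. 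There is no real obstacle here: all the weight of the argument sits in Lemma~\ref{lem2.4R} (hence in the semi-uniform subadditive ergodic theorem) and in Kingman's theorem, and the only point worth a remark is that the extremal growth rate is actually \emph{achieved} by an ergodic measure rather than merely approximated --- which is precisely where the compactness of $\varOmega$ enters, just as it does in Theorem~\ref{thm2.3R} and Lemma~\ref{lem2.4R}.
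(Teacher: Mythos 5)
Your proposal is correct and follows essentially the same route as the paper: the paper obtains Lemma~\ref{lem2.5R} precisely by reading the attainment of the maximum off Lemma~\ref{lem2.4R} and then invoking Kingman's theorem (Theorem~\ref{thm2.2R}) at the extremal ergodic measure $\pmb{\mu}_*$ to pass from $\chi(\pmb{\mu}_*,\langle f_n\rangle_1^{\infty})$ to $\chi(\omega,\langle f_n\rangle_1^{\infty})$ for $\pmb{\mu}_*$-a.s.\ $\omega$. Your optional re-derivation of the attainment via Theorem~\ref{thm2.3R} is just the proof of Lemma~\ref{lem2.4R} repeated, so nothing genuinely new or different is involved.
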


This result is an extension of \cite[Theorem~3.1]{DHX-ERA} from finite set $\bS$ to infinite case.
For the case that $\langle f_n\rangle_1^\infty\colon\varOmega\rightarrow\mathbb{R}$, the statement of Lemma~\ref{lem2.5R} can be read in Y.-L.~Cao~\cite{Cao}.

On the growth of the spectral radius, the following result is due to Ian D.~Morris, which has been proved based on the
multiplicative ergodic theorem (cf.~\cite{FK,Ose,FLQ}) using invariant cone.

\begin{Theorem}[See \cite{Mor}]\label{thm2.6R}
Let $T\colon(\varOmega,\mathscr{B}_{\!\varOmega},\mu)\rightarrow(\varOmega,\mathscr{B}_{\!\varOmega},\mu)$
be a measure-preserving continuous transformation of a metrizable topological
space $\varOmega$, and $\mathcal
{L}\colon\varOmega\times\mathbb{Z}_+\rightarrow\mathbb{C}^{d\times
d}$ a Borel measurable linear cocycle driven by $T$, i.e.,
\begin{equation*}
\mathcal {L}(\omega,0)=\mathrm{Id}_{\mathbb{C}^d},\quad\mathcal
{L}(\omega,\ell+m)=\mathcal {L}(T^m(\omega),\ell)\mathcal
{L}(\omega,m)\quad \forall \omega\in\varOmega\textrm{ and }\ell,m\ge 1.
\end{equation*}
If $\int_\varOmega\log^+\|\mathcal
{L}(\omega,1)\|d\mu(\omega)<\infty$ where $\log 0=-\infty$ and $\log^+x=\max\{0,\log x\}$ for any $x\ge0$, then
one can find a $T$-invariant Borel subset $\varUpsilon_{\!\mu}$ of
$\varOmega$ with $\mu(\varUpsilon_{\!\mu})=1$ such that
\begin{equation*}
\limsup_{n\to+\infty}\frac{1}{n}\log\rho(\mathcal {L}(\omega,n))
=\lim_{n\to+\infty}\frac{1}{n}\log\|\mathcal{L}(\omega,n)\|
\end{equation*}
for all $\omega\in\varUpsilon_{\!\mu}$.
\end{Theorem}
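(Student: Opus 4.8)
The plan is to derive Theorem~\ref{thm2.6R} from the multiplicative ergodic theorem by producing, along suitable recurrence times, an \emph{exact} invariant cone on which the matrix $\mathcal{L}(\omega,n)$ is uniformly expanding, and then reading off an eigenvalue lower bound from that expansion. First I would reduce to the case that $\mu$ is ergodic (ergodic decomposition) and that $T$ is invertible (replace $(\varOmega,T,\mu)$ by its natural extension and lift $\mathcal{L}$ through the factor map; since $\|\mathcal{L}(\cdot,n)\|$ and $\rho(\mathcal{L}(\cdot,n))$ depend only on the forward orbit, an almost-sure statement upstairs projects back down). Applying Kingman's theorem (Theorem~\ref{thm2.2R}) to $f_n(\omega)=\log\|\mathcal{L}(\omega,n)\|$ shows that $\lambda_1:=\lim_{n}\frac1n\log\|\mathcal{L}(\omega,n)\|$ exists for $\mu$-a.e.\ $\omega$ and equals the top Lyapunov exponent. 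The bound $\limsup_n\frac1n\log\rho(\mathcal{L}(\omega,n))\le\lambda_1$ is immediate from $\rho(A)\le\|A\|$, and if $\lambda_1=-\infty$ both sides equal $-\infty$; so it remains only to show, for each fixed $\varepsilon>0$ and $\mu$-a.e.\ $\omega$, that $\limsup_n\frac1n\log\rho(\mathcal{L}(\omega,n))\ge\lambda_1-\varepsilon$.

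For this I would invoke the (semi-invertible) multiplicative ergodic theorem \cite{Ose,FLQ} to obtain a measurable $\mathcal{L}$-invariant splitting $\mathbb{C}^d=E(\omega)\oplus F(\omega)$ in which every Lyapunov exponent on $E(\omega)$ equals $\lambda_1$ while the exponents on $F(\omega)$ do not exceed $\lambda_1-g$ for some spectral gap $g>0$ (if $F(\omega)=\{0\}$ the cocycle is already uniformly expanding at rate $\lambda_1$ and the rest is easier). By Lusin's and Egorov's theorems together with compactness, I would fix a compact set $K$ with $\mu(K)>0$ on which $\omega\mapsto(E(\omega),F(\omega))$ is continuous, the angle between $E(\omega)$ and $F(\omega)$ is bounded below by some $\theta_0>0$, and the estimates $\|\mathcal{L}(\omega,n)v\|\ge e^{n(\lambda_1-\varepsilon)}\|v\|$ for $v\in E(\omega)$ and $\|\mathcal{L}(\omega,n)v\|\le e^{n(\lambda_1-g+\varepsilon)}\|v\|$ for $v\in F(\omega)$ hold for all $n\ge N$, uniformly for $\omega\in K$. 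For $0<\delta_1<\delta_2$ set $C_i(\omega)=\{v_E+v_F:v_E\in E(\omega),\,v_F\in F(\omega),\,\|v_F\|\le\delta_i\|v_E\|\}$. Then for $\omega\in K$ and $n\ge N$ the domination estimates give $\mathcal{L}(\omega,n)\,C_2(\omega)\subseteq C_1(T^n\omega)$ (the aperture is multiplied by $e^{-n(g-2\varepsilon)}$), and moreover, whenever in addition $T^n\omega\in K$, one gets the uniform expansion $\|\mathcal{L}(\omega,n)w\|\ge\beta_n\|w\|$ for every $w\in C_2(\omega)$, with $\beta_n=\frac{\sin\theta_0}{1+\delta_2}\,e^{n(\lambda_1-\varepsilon)}$.

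Now I would apply Poincar\'e recurrence inside $K$: for $\mu$-a.e.\ $\omega\in K$ there is a sequence $n_k\to\infty$ with $T^{n_k}\omega\in K$ and $T^{n_k}\omega\to\omega$. Because the cone field is continuous on $K$ and $C_1(\omega)$ lies strictly inside $C_2(\omega)$, we get $C_1(T^{n_k}\omega)\subseteq C_2(\omega)$ for all large $k$; combined with $\mathcal{L}(\omega,n_k)\,C_2(\omega)\subseteq C_1(T^{n_k}\omega)$ this produces the \emph{honest} self-map $A\,C_2(\omega)\subseteq C_2(\omega)$, where $A:=\mathcal{L}(\omega,n_k)$. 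Fixing any $v\in E(\omega)\setminus\{0\}\subseteq C_2(\omega)$, induction gives $A^m v\in C_2(\omega)$ and hence $\|A^m v\|\ge\beta_{n_k}^m\|v\|$ for all $m\ge1$, so $\rho(A)=\lim_m\|A^m\|^{1/m}\ge\limsup_m\|A^m v\|^{1/m}\ge\beta_{n_k}$. Therefore $\frac1{n_k}\log\rho(\mathcal{L}(\omega,n_k))\ge\lambda_1-\varepsilon+\frac1{n_k}\log\frac{\sin\theta_0}{1+\delta_2}\to\lambda_1-\varepsilon$ as $k\to\infty$. Since $\mu(K)$ can be taken arbitrarily close to $1$ this holds for $\mu$-a.e.\ $\omega\in\varOmega$, and letting $\varepsilon$ run through a sequence decreasing to $0$ completes the proof.

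\textbf{The main difficulty} is a clash of scales: Poincar\'e recurrence returns $T^{n_k}\omega$ to $\omega$ with no quantitative rate, whereas $\|\mathcal{L}(\omega,n_k)\|$ grows exponentially in $n_k$, so one cannot treat the moving subspace $E(T^{n_k}\omega)$ as a small perturbation of $E(\omega)$ inside any estimate that involves the huge matrix $\mathcal{L}(\omega,n_k)$ --- in particular $\rho$ is far from continuous in the matrix. The device that neutralizes this is to build the cone field with room to spare, $\mathcal{L}(\omega,n)\,C_2\subseteq C_1\subset\subset C_2$, which is exactly what the spectral gap $g>0$ buys: then \emph{any} recurrence that is merely close enough already yields an exact invariant cone, and the quantitative content (the expansion factor $\beta_{n_k}$) comes only from the uniform estimates on $K$, never from the uncontrolled closeness of $T^{n_k}\omega$ to $\omega$.
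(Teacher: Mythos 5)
The paper does not prove this statement itself --- it is quoted verbatim from Morris~\cite{Mor}, with only the remark that the proof there rests on the multiplicative ergodic theorem and an invariant cone. Your argument is a correct reconstruction along exactly those lines (Oseledets splitting with a spectral gap, Lusin/Egorov to get uniform estimates and a continuous cone field on a compact set of large measure, Poincar\'e recurrence plus the strict inclusion $C_1\subset\subset C_2$ to manufacture an exactly invariant expanding cone, hence a lower bound on $\rho(\mathcal{L}(\omega,n_k))$), so it matches the cited proof's approach and I see no gap beyond routine technicalities such as passing to the natural extension and arranging the $T$-invariance of $\varUpsilon_{\!\mu}$.
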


Particularly, let $\varOmega=\varSigma_{\!\mathcal{I}}^+, T=\theta_+$ and $\mathcal{L}(\omega,n)=S_{i_n}\dotsm S_{i_1}$ for $\omega=i(\cdot)$. Then, this theorem tells us that there holds:
\begin{equation*}
\limsup_{n\to+\infty}\sqrt[n]{\rho(S_{i_n}\cdots S_{i_1})}=\lim_{n\to+\infty}
\sqrt[n]{\|S_{i_n}\cdots S_{i_1}\|}\quad\mu\textrm{-a.s. }i(\cdot)\in\varSigma_{\!\mathcal{I}}^+,
\end{equation*}
for every $\theta_+$-invariant probability measure $\mu$ on $\varSigma_{\!\mathcal{I}}^+$.
\subsection{Proof of Theorem~A and an optimization result}\label{sec2.2}%
Let
$\varLambda\subset\varSigma_{\!\mathcal{I}}^+$ be a $\theta_+$-invariant
closed set and $\bS\colon \mathcal{I}\rightarrow
\mathbb{C}^{d\times d}$ be continuous. Then, the $\varLambda$-stability of the linear switched system given by
\begin{equation*}
x_n=S_{i_n}\cdots S_{i_1}x_0\qquad (n\ge 1,\ x_0\in\mathbb{C}^d,\;
i(\cdot)\in\varSigma_{\!\mathcal{I}}^+),
\end{equation*}
is equivalent to the stability of the linear cocycle defined as follows:
\begin{equation*}
\mathcal
{L}\colon\varLambda\times\mathbb{Z}_+\rightarrow\mathbb{C}^{d\times
d};\quad (i(\cdot),k)\mapsto\mathcal {L}(i(\cdot),k)=
\begin{cases}
\mathrm{Id}_{\mathbb{C}^d}& \textrm{if }k=0,\\
S_{i_k}\cdots S_{i_1}& \textrm{if }k\ge1.
\end{cases}
\end{equation*}
Under the product topology of $\varSigma_{\!\mathcal {I}}^+$, the
cocycle $\mathcal {L}(i(\cdot),k)$ is continuous, where
$\mathbb{Z}_+=\{0,1,2,\dotsc\}$ is endowed with the discrete topology. In addition, note that $\varSigma_{\!\mathcal {I}}^+$ is metrizable.

Now, we are ready to prove our Gel'fand-type spectral-radius theorem.

\begin{proof}[\textbf{Proof of Theorem~A}]
Since $\varLambda$ is a compact subset and $\mathcal {L}(i(\cdot),1)$ is
continuous with respect to $i(\cdot)\in\varLambda$,
$\log^+\|\mathcal {L}(i(\cdot),1)\|$ is bounded uniformly for
$i(\cdot)\in\varLambda$. Applying Theorem~\ref{thm2.6R} in the
case $\varOmega=\varLambda$ and
$T={\theta_+}_{\!\upharpoonright\!\varLambda}$, we could define a
$\theta_+$-invariant subset $\varUpsilon\subset\varLambda$ such
that $\mu(\varUpsilon)=1$ for all $\mu\in\mathcal
{M}_{\textit{erg}}(\varLambda,
{\theta_+}_{\!\upharpoonright\!\varLambda})$ and that
\begin{equation*}
\limsup_{n\to+\infty}\frac{1}{n}\log\rho(\mathcal
{L}(i(\cdot),n))
=\lim_{n\to+\infty}\frac{1}{n}\log\|\mathcal{L}(i(\cdot),n)\|
\quad\forall i(\cdot)\in\varUpsilon.
\end{equation*}
In fact, for each $\mu\in\mathcal
{M}_{\textit{erg}}(\varLambda,
{\theta_+}_{\!\upharpoonright\!\varLambda})$ we can define a set $\varUpsilon_{\!\mu}$ by Theorem~\ref{thm2.6R} and then let $\varUpsilon=\bigcup\varUpsilon_{\!\mu}$.
Then from the definition
of the generalized spectral radius, there holds the inequality
\begin{equation*}
\rho(\bS_{\upharpoonright\!\varLambda})\ge\limsup_{n\to+\infty}\sqrt[n]{\rho(\mathcal
{L}(i(\cdot),n))}\quad\forall i(\cdot)\in\varUpsilon.
\end{equation*}
Theorem~\ref{thm2.6R} implies that
\begin{equation*}
\rho(\bS_{\upharpoonright\!\varLambda})\ge\lim_{n\to+\infty}\sqrt[n]{\|\mathcal
{L}(i(\cdot),n)\|}\quad\forall i(\cdot)\in\varUpsilon.
\end{equation*}
Since $f_n(i(\cdot))=\log\|\mathcal
{L}(i(\cdot),n)\|$ is continuous with respect to $i(\cdot)\in\varLambda$ and the sequence $\langle f_n\rangle_1^{+\infty}$ is $\theta_+$-subadditive, from Theorem~\ref{thm2.2R} it follows that
\begin{align*}
\log\rho(\bS_{\upharpoonright\!\varLambda})&\ge\inf_{n\ge
1}\left\{\int_{\varLambda}\log\sqrt[n]{\|\mathcal
{L}(i(\cdot),n)\|}\,d\mu(i(\cdot))\right\}\\
&=\lim_{n\to+\infty}\int_{\varLambda}\log\sqrt[n]{\|\mathcal
{L}(i(\cdot),n)\|}\,d\mu(i(\cdot))
\end{align*}
for all $\mu\in\mathcal {M}_{\textit{erg}}(\varLambda, {\theta_+}_{\!\upharpoonright\!\varLambda})$.
Now, applying Theorem~\ref{thm2.3R} one can obtain that
\begin{equation*}
\log\rho(\bS_{\upharpoonright\!\varLambda})\ge\lim_{n\to+\infty}\left\{\sup_{i(\cdot)\in\varLambda}\log\sqrt[n]{\|\mathcal
{L}(i(\cdot),n)\|}\right\}.
\end{equation*}
Thus, from the definition of $\hat{\rho}(\bS_{\upharpoonright\!\varLambda})$ there
holds the inequality $\rho(\bS_{\upharpoonright\!\varLambda})\ge\hat{\rho}(\bS_{\upharpoonright\!\varLambda})$
and further there follows that $\rho(\bS_{\upharpoonright\!\varLambda})=\hat{\rho}(\bS_{\upharpoonright\!\varLambda})$
from $\rho(\bS_{\upharpoonright\!\varLambda})\le\hat{\rho}(\bS_{\upharpoonright\!\varLambda})$.
This completes the proof of Theorem~A.
\end{proof}

As a consequence of Lemma~\ref{lem2.5R} and Theorem A, we could
obtain at once the following optimization result.

\begin{cor}\label{cor2.7R}
Let $\bS\colon\mathcal{I}\rightarrow
\mathbb{C}^{d\times d}$ be continuous and assume
$\varLambda\subset\varSigma_{\!\mathcal {I}}^+$ is an invariant compact
set of the one-sided Markov shift
$\theta_+\colon\varSigma_{\!\mathcal{I}}^+\rightarrow\varSigma_{\!\mathcal{I}}^+$.
Then, for the linear switched system
\begin{equation*}
x_n=S_{i_n}\cdots S_{i_1}x_0\qquad (n\ge 1,\ x_0\in\mathbb{C}^d,\;
i(\cdot)\in\varLambda),
\end{equation*}
there holds that
\begin{equation*}
\rho(\bS_{\upharpoonright\!\varLambda})=\max_{\mu\in\mathcal
{M}_{\textit{erg}}(\varLambda,
{\theta_+}_{\!\upharpoonright\!\varLambda})}\{\exp\chi(\mu,\bS)\}
=\max_{i(\cdot)\in\varLambda}\left\{\exp\chi(\bS_{i(\cdot)})\right\}
=\max_{(x_0,i(\cdot))\in\mathbb{C}^d\times\varLambda}\left\{\exp\chi(x_0,\bS_{i(\cdot)})\right\}.
\end{equation*}
\end{cor}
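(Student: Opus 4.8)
The plan is to string together three equalities, moving from the most ``measure-theoretic'' quantity to the most ``dynamical'' one. Set $f_n(i(\cdot))=\log\|\mathcal{L}(i(\cdot),n)\|=\log\|S_{i_n}\cdots S_{i_1}\|$; this is a continuous $\theta_+$-subadditive sequence on the compact space $\varLambda$, so it falls under the hypotheses of Theorem~\ref{thm2.3R} and hence of Lemmas~\ref{lem2.4R} and \ref{lem2.5R}. Write $\chi(\mu,\bS)=\chi(\mu,\langle f_n\rangle_1^\infty)=\inf_{\ell\ge1}\frac1\ell\int_\varLambda f_\ell\,d\mu$ and note that $\chi(\bS_{i(\cdot)})=\chi(i(\cdot),\langle f_n\rangle_1^\infty)=\limsup_n\frac1n f_n(i(\cdot))$ by definition. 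By Theorem~A we already know $\rho(\bS_{\upharpoonright\!\varLambda})=\hat{\rho}(\bS_{\upharpoonright\!\varLambda})$, and $\log\hat{\rho}(\bS_{\upharpoonright\!\varLambda})=\chi(\langle f_n\rangle_1^\infty)=\lim_n\sup_{i(\cdot)\in\varLambda}\frac1n f_n(i(\cdot))$ by the remark following the definition of $\hat{\rho}$ (the limit exists by subadditivity and $\theta_+$-invariance of $\varLambda$).

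First I would prove $\rho(\bS_{\upharpoonright\!\varLambda})=\max_{\mu}\exp\chi(\mu,\bS)$. By Lemma~\ref{lem2.4R}, $\chi(\langle f_n\rangle_1^\infty)=\max_{\mu\in\mathcal{M}_{\textit{erg}}(\varLambda,\theta_+)}\chi(\mu,\langle f_n\rangle_1^\infty)$, and taking exponentials of both sides, together with Theorem~A, gives exactly this identity; the maximum is attained by some $\pmb{\mu}_*$ furnished by Lemma~\ref{lem2.5R}. Next, $\max_\mu\exp\chi(\mu,\bS)=\max_{i(\cdot)\in\varLambda}\exp\chi(\bS_{i(\cdot)})$: for the ``$\le$'' direction, apply Kingman (Theorem~\ref{thm2.2R}) to $\pmb{\mu}_*$ to get a point $i(\cdot)\in\varLambda$ with $\chi(\bS_{i(\cdot)})=\lim_n\frac1n f_n(i(\cdot))=\chi(\pmb{\mu}_*,\bS)$ (this is the last assertion of Lemma~\ref{lem2.5R}); for ``$\ge$'', every $i(\cdot)$ satisfies $\limsup_n\frac1n f_n(i(\cdot))\le\chi(\langle f_n\rangle_1^\infty)=\max_\mu\chi(\mu,\bS)$ directly from the definition of $\chi(\langle f_n\rangle_1^\infty)$ as a supremum over $\varLambda$.

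For the last equality, $\max_{i(\cdot)\in\varLambda}\exp\chi(\bS_{i(\cdot)})=\max_{(x_0,i(\cdot))}\exp\chi(x_0,\bS_{i(\cdot)})$: the inequality $\chi(x_0,\bS_{i(\cdot)})\le\chi(\bS_{i(\cdot)})$ is immediate from $\|S_{i_n}\cdots S_{i_1}x_0\|\le\|S_{i_n}\cdots S_{i_1}\|\,\|x_0\|$, which already gives ``$\ge$'' for the left-hand max. For the reverse, take the optimizing $i(\cdot)$ from the previous step, where in fact $\chi(\bS_{i(\cdot)})=\lim_n\frac1n\log\|S_{i_n}\cdots S_{i_1}\|$ exists (not merely a $\limsup$), and use the multiplicative ergodic theorem / invariant-cone structure underlying Theorem~\ref{thm2.6R}: at a $\pmb{\mu}_*$-generic point the top Lyapunov exponent is realized along a measurable direction, so there is a unit vector $x_0$ (in the top Oseledets subspace) with $\chi(x_0,\bS_{i(\cdot)})=\chi(\bS_{i(\cdot)})$. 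Alternatively, and more elementarily, one picks for each $n$ a unit vector $x_n$ with $\|S_{i_n}\cdots S_{i_1}x_n\|=\|S_{i_n}\cdots S_{i_1}\|$, passes to a convergent subsequence $x_{n_k}\to x_0$, and checks that $x_0$ works along that subsequence, which suffices for the $\limsup$. I expect this last step --- producing a single initial state that attains the growth rate of the norm along the chosen signal --- to be the main obstacle, since it is the only place where one genuinely needs something beyond the subadditive machinery; invoking the Oseledets structure already embedded in Theorem~\ref{thm2.6R} is the cleanest way around it.
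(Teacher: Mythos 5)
Your proposal is correct and follows the paper's proof essentially verbatim: apply Lemma~\ref{lem2.5R} to $f_n(i(\cdot))=\log\|S_{i_n}\cdots S_{i_1}\|$ to produce an ergodic measure attaining the joint growth rate, pass to a $\pmb{\mu}_*$-generic signal via Kingman's theorem, obtain an optimal initial vector from the multiplicative ergodic theorem, and conclude with Theorem~A. As an aside, the ``more elementary'' alternative you sketch for the last step is not actually an obstacle: if $x_n$ is a unit vector attaining $\|A_n x_n\|=\|A_n\|$ for $A_n=S_{i_n}\cdots S_{i_1}$ and $x_{n_k}\to x_0$ along a subsequence realizing the $\limsup$, then $\|A_{n_k}x_0\|\ge\|A_{n_k}\|\left(1-\|x_{n_k}-x_0\|\right)\ge\tfrac12\|A_{n_k}\|$ for large $k$, which already gives $\chi(x_0,\bS_{i(\cdot)})=\chi(\bS_{i(\cdot)})$ without invoking Oseledets.
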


\noindent Here $\chi(\bS_{i(\cdot)})$ is defined as Section~\ref{sec1.1}, and
\begin{equation*}
\chi(\mu,\bS):=\limsup_{n\to+\infty}\frac{1}{n}\log\|S_{i_n}\cdots S_{i_1}\|
\quad \textrm{for }\mu\textrm{-a.s. }i(\cdot)\in\varLambda
\end{equation*}
is called the (maximal) Lyapunov exponents of $\bS$ at $\mu$.

\begin{proof}
Applying Lemma~\ref{lem2.5R} to the case that
$f_n(i(\cdot))=\log\|S_{i_n}\cdots S_{i_1}\|$
for $i(\cdot)\in\varOmega=\varLambda$ and
$T={\theta_+}_{\!{\upharpoonright\varLambda}}$, one can find some
$\theta_+$-ergodic probability, say $\mu_*$, on $\varLambda$ such that
\begin{equation*}
\hat{\rho}(\bS_{\upharpoonright\!\varLambda})=\exp\chi(\mu,\bS)=\exp\chi(\bS_{i(\cdot)})\quad
\textrm{for }\mu_*\textrm{-a.s. }i(\cdot)\in\varLambda.
\end{equation*}
Furthermore, from the multiplicative ergodic theorem~\cite{FK,Ose}, it follows that there always are unit
vectors $x_0\in\mathbb{C}^d$ satisfying $\chi(\bS_{i(\cdot)})=\chi(x_0,\bS_{i(\cdot)})$.
Thus, the statement follows at once from Theorem~A.
\end{proof}

Thus, there holds the following.

\begin{cor}\label{cor2.8R}
Let $\bS\colon\mathcal{I}\rightarrow
\mathbb{C}^{d\times d}$ be continuous and assume
$\varLambda\subset\varSigma_{\!\mathcal {I}}^+$ is an invariant compact
set of the one-sided Markov shift
$\theta_+\colon\varSigma_{\!\mathcal{I}}^+\rightarrow\varSigma_{\!\mathcal{I}}^+$.
Then, the following statements are equivalent to each other.
\begin{description}
\item[(1)] $\rho(\bS_{\upharpoonright\!\varLambda})<1$.

\item[(2)] $\bS$ is $\varLambda$-absolutely exponentially stable.

\item[(3)] $\bS$ is ``$\varLambda$-pointwise exponentially stable", i.e.,
$\chi(x_0,\bS_{i(\cdot)})<0$ for all $x_0\in\mathbb{C}^d$ and any $i(\cdot)\in\varLambda$.
\end{description}
\end{cor}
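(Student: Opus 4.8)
The plan is to derive Corollary~\ref{cor2.8R} as a quick consequence of Corollary~\ref{cor2.7R} together with the elementary chain of implications $\mathrm{(2)}\Rightarrow\mathrm{(3)}\Rightarrow$ a statement about Lyapunov exponents that feeds back into $\mathrm{(1)}$. First I would record the easy implication $\mathrm{(2)}\Rightarrow\mathrm{(3)}$: if $\bS$ is $\varLambda$-absolutely exponentially stable, then in particular $S_{i_n}\cdots S_{i_1}\to\mathbf{0}$ for every $i(\cdot)\in\varLambda$, which forces $\|S_{i_n}\cdots S_{i_1}x_0\|\to0$ and hence $\chi(x_0,\bS_{i(\cdot)})<0$ for every $x_0\neq\mathbf{0}$ and every $i(\cdot)\in\varLambda$; the case $x_0=\mathbf{0}$ is trivial (or is excluded by convention). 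Then I would prove $\mathrm{(3)}\Rightarrow\mathrm{(1)}$ using Corollary~\ref{cor2.7R}: that corollary gives
\begin{equation*}
\rho(\bS_{\upharpoonright\!\varLambda})=\max_{(x_0,i(\cdot))\in\mathbb{C}^d\times\varLambda}\left\{\exp\chi(x_0,\bS_{i(\cdot)})\right\},
\end{equation*}
so if $\chi(x_0,\bS_{i(\cdot)})<0$ for all admissible pairs, the maximum on the right is a maximum of finitely-bounded negative quantities, hence strictly less than $1$ (here one must be slightly careful: the maximum is attained, by Corollary~\ref{cor2.7R}, at some specific pair $(x_0^*,i^*(\cdot))$, so the supremum really equals $\exp\chi(x_0^*,\bS_{i^*(\cdot)})<1$ rather than being a supremum of negative numbers that might approach $0$ — this attainment is exactly the point where compactness and the ergodic machinery of Lemma~\ref{lem2.5R} enter).

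To close the cycle I would prove $\mathrm{(1)}\Rightarrow\mathrm{(2)}$. By Theorem~A, $\rho(\bS_{\upharpoonright\!\varLambda})<1$ is the same as $\hat{\rho}(\bS_{\upharpoonright\!\varLambda})<1$, which by $\theta_+$-invariance of $\varLambda$ and the subadditivity noted in Section~\ref{sec1.1} means
\begin{equation*}
\lim_{n\to+\infty}\left\{\sup_{i(\cdot)\in\varLambda}\sqrt[n]{\|S_{i_n}\cdots S_{i_1}\|}\right\}<1.
\end{equation*}
Pick $\gamma$ with $\hat{\rho}(\bS_{\upharpoonright\!\varLambda})<\gamma<1$; then there is $N\ge1$ with $\sup_{i(\cdot)\in\varLambda}\|S_{i_n}\cdots S_{i_1}\|\le\gamma^n$ for all $n\ge N$, so $\|S_{i_n}\cdots S_{i_1}\|\to0$ uniformly over $i(\cdot)\in\varLambda$, giving (2) with an explicit constant $\mathbf{c}=\max\{1,\sup_{1\le n<N}\sup_{i(\cdot)\in\varLambda}\gamma^{-n}\|S_{i_n}\cdots S_{i_1}\|\}$ — note boundedness of $\bS$, or merely continuity on the compact set of length-$n$ words $\bS_w$ for $n<N$, makes $\mathbf{c}$ finite. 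One subtle point: Theorem~A is stated for $\bS$ continuous (no boundedness hypothesis), and $\varLambda$ compact already makes $\{S_{i_n}\cdots S_{i_1}:i(\cdot)\in\varLambda,\,1\le n\le N\}$ a relatively compact family, so all suprema appearing are finite; I would remark this explicitly so the reader sees why no separate boundedness assumption on $\bS$ is needed here, unlike in Theorem~B.

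The main obstacle, such as it is, is not a deep one but a matter of being precise about \emph{attainment} in the step $\mathrm{(3)}\Rightarrow\mathrm{(1)}$: pointwise exponential stability only says each Lyapunov exponent is negative, and a priori the supremum over the (noncompact) set $\mathbb{C}^d\times\varLambda$ of these exponents could be $0$. What saves us is precisely Corollary~\ref{cor2.7R}, whose proof via Lemma~\ref{lem2.5R} produces an ergodic measure $\mu_*$ and, through the multiplicative ergodic theorem, an actual optimal unit vector $x_0^*$ and signal $i^*(\cdot)$ realizing the supremum; so the supremum is a genuine maximum and is therefore $<1$. I would therefore structure the proof as: (i) $\mathrm{(1)}\Leftrightarrow\mathrm{(2)}$ directly from Theorem~A plus the uniform-convergence argument above; (ii) $\mathrm{(2)}\Rightarrow\mathrm{(3)}$ trivially; (iii) $\mathrm{(3)}\Rightarrow\mathrm{(1)}$ via the "$\max$'' form of Corollary~\ref{cor2.7R}. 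This is short enough that the paper can afford to spell out each of the three implications in a few lines each.
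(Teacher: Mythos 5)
Your proposal is correct and matches the paper's route: the paper gives no explicit proof, presenting the corollary as an immediate consequence of Corollary~\ref{cor2.7R} (whose \emph{attained} maximum yields $\mathrm{(3)}\Rightarrow\mathrm{(1)}$) together with Theorem~A and the uniform-decay argument of Lemma~\ref{lem3.2} for $\mathrm{(1)}\Rightarrow\mathrm{(2)}$, which is exactly what you spell out. One small caveat: in $\mathrm{(2)}\Rightarrow\mathrm{(3)}$ the step ``$S_{i_n}\cdots S_{i_1}x_0\to\mathbf{0}$, hence $\chi(x_0,\bS_{i(\cdot)})<0$'' is a non-sequitur as written (convergence to zero alone is compatible with a zero Lyapunov exponent), so you should instead invoke the exponential bound $\|S_{i_n}\cdots S_{i_1}x_0\|\le \mathbf{c}\|x_0\|\exp(n\chi)$ with $\chi<0$ directly, as in the equivalences recorded in Section~\ref{sec1.1}.
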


This statement will be useful for proving Theorem~B in Section~\ref{sec3}.
\section{Criteria for stability under switching constraints}\label{sec3}%
In this section, we will prove Theorem~B stated in Section~\ref{sec1.2}, using Theorem~A and Corollary~\ref{cor2.8R} that have been proved in Section~\ref{sec2}.
As before, we let $\varSigma_{\!\mathcal {I}}^+$ denote the space of all switching signals $i(\cdot)\colon\mathbb{N}\rightarrow\mathcal{I}$.
Let $\theta_+\colon\varSigma_{\!\mathcal {I}}^+\rightarrow\varSigma_{\!\mathcal {I}}^+$
be the one-sided Markov shift defined as in Theorem~A, that is to say,
\begin{equation*}
\theta_+\colon i(\cdot)\mapsto i(\cdot+1)\quad\forall i(\cdot)=(i_n)_{n=1}^{+\infty}\in\varSigma_{\!\mathcal {I}}^+.
\end{equation*}
Let $\varLambda$ be an arbitrary, $\theta_+$-invariant, closed, and nonempty subset of
$\varSigma_{\!\mathcal{I}}^+$ and
$\bS\colon\mathcal{I}\rightarrow\mathbb{C}^{d\times d}$
continuous with respect to $i\in\mathcal {I}$.
Recall that the
linear switched system with constraint
$\varLambda$
\begin{equation*}
x_n=S_{i_n}\cdots S_{i_1}x_0\qquad
(n\ge 1,\ x_0\in\mathbb{C}^d,\; i(\cdot)\in
\varLambda)\leqno{\bS_{\upharpoonright\!\varLambda}}
\end{equation*}
is called \emph{$\varLambda$-absolutely asymptotically stable} in case
\begin{equation*}
S_{i_n}\cdots S_{i_1}\to \mathbf{0}_{d\times d}\; \textrm{ as }
n\to\infty\qquad\forall i(\cdot)=(i_n)_{n=1}^{+\infty}\in\varLambda,
\end{equation*}
where $\mathbf{0}_{d\times d}$ is the origin of $\mathbb{C}^{d\times d}$.
Let $\|\cdot\|_2$ be the matrix norm on $\mathbb{C}^{d\times d}$ induced by the usual Euclidean vector norm on $\mathbb{C}^d$.

\subsection{A criterion of $\varLambda$-stability}%
First, we present a criterion of $\varLambda$-absolute asymptotic stability (Lemma~\ref{lem3.1}), which is an extension of
\cite[Theorem~4.1]{BT80} from the case free of any constraints to a system which obeys switching constraints.

\begin{Lemma}\label{lem3.1}
Let $\varLambda$ be a $\theta_+$-invariant compact subset of $\varSigma_{\!\mathcal{I}}^+$ and let
\begin{equation*}
\bS_{\upharpoonright\!\varLambda}^+(0)=\left\{\mathrm{Id}_{\mathbb{C}^d}\right\},\quad \textsl{\textbf{S}}_{\upharpoonright\!\varLambda}^+(\ell)=\left\{S_{i_\ell}\cdots S_{i_1};\
i(\cdot)\in\varLambda\right\}\ \textrm{for }\ell\ge1
\quad\textrm{and}\quad\bS_{\upharpoonright\!\varLambda}^+={\bigcup}_{\ell\ge0}\bS_{\upharpoonright\!\varLambda}^+(\ell).
\end{equation*}
Then, $\bS$ is $\varLambda$-absolutely asymptotically stable if and only if
\begin{description}
\item[(1)] $\bS_{\upharpoonright\!\varLambda}^+$ is bounded in
$\mathbb{C}^{d\times d}$,i.e., $\exists\,\beta>0$ such that
$\|A\|_2\le\beta\ \forall A\in\bS_{\upharpoonright\!\varLambda}^+$; and

\item[(2)] there exists a constant
$\gamma>0$ and an integer $N\ge1$ such that
\begin{equation*}
\rho(A)\le\gamma<1\quad\forall A\in\bS_{\upharpoonright\!\varLambda}^+(\ell),
\end{equation*}
for any $\ell\ge N$.
\end{description}
\end{Lemma}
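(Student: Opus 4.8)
\textbf{Proof proposal for Lemma~\ref{lem3.1}.}

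The plan is to prove the two directions separately, using Theorem~A together with Corollary~\ref{cor2.8R} as the main engine. For the ``only if'' direction, assume $\bS$ is $\varLambda$-absolutely asymptotically stable. First I would establish (1): the boundedness of $\bS_{\upharpoonright\!\varLambda}^+$. The idea is that for each fixed $i(\cdot)\in\varLambda$ the sequence $S_{i_n}\cdots S_{i_1}\to\mathbf{0}$, hence $\sup_{n\ge0}\|S_{i_n}\cdots S_{i_1}\|_2<+\infty$; then a compactness/Baire-category argument on $\varLambda$ — writing $\varLambda=\bigcup_{k\ge1}\{i(\cdot):\sup_n\|\mathcal{L}(i(\cdot),n)\|_2\le k\}$ and using that each of these sets is closed by continuity of the cocycle — promotes this to a uniform bound $\beta$ over all of $\varLambda$. (Alternatively one invokes the Fenichel-type uniformity, but Baire category suffices here.) For (2): $\varLambda$-absolute asymptotic stability clearly forces $\rho(\bS_{i(\cdot)})\le\chi(\bS_{i(\cdot)})<0$-type behavior, and more directly it forces $\rho(\bS_{\upharpoonright\!\varLambda})\le1$; combined with boundedness, I claim $\rho(\bS_{\upharpoonright\!\varLambda})<1$ strictly. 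Indeed if $\rho(\bS_{\upharpoonright\!\varLambda})=1$, then by Theorem~A $\hat\rho(\bS_{\upharpoonright\!\varLambda})=1$, and then by Corollary~\ref{cor2.7R} there is an ergodic $\mu$ and a point $i(\cdot)$ with $\lim_n\sqrt[n]{\|\mathcal{L}(i(\cdot),n)\|}=1$, which combined with boundedness does not immediately contradict stability — so instead I would run this through Corollary~\ref{cor2.8R}: $\varLambda$-absolute asymptotic stability implies $\varLambda$-pointwise exponential stability is \emph{not} automatic, so the cleaner route is to use the Fenichel uniformity theorem (Lemma~\ref{lem3.3}, referenced in the excerpt) to upgrade pointwise convergence to uniform exponential decay, whence $\hat\rho(\bS_{\upharpoonright\!\varLambda})<1$, and then Theorem~A gives $\rho(\bS_{\upharpoonright\!\varLambda})<1$. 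Once $\rho(\bS_{\upharpoonright\!\varLambda})<1$, pick $\gamma$ with $\rho(\bS_{\upharpoonright\!\varLambda})<\gamma<1$; by definition of $\rho(\bS_{\upharpoonright\!\varLambda})$ as a $\limsup$ there is $N$ with $\sqrt[\ell]{\rho(S_{i_\ell}\cdots S_{i_1})}\le\gamma$ for all $\ell\ge N$ and all $i(\cdot)\in\varLambda$, which is exactly (2) (replacing $\gamma$ by $\gamma^N<1$ if one wants the bound on $\rho$ itself rather than its $\ell$-th root, or simply noting $\rho(A)\le\gamma^\ell\le\gamma^N<1$).

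For the ``if'' direction, assume (1) and (2). From (2) and Theorem~A I get $\hat\rho(\bS_{\upharpoonright\!\varLambda})=\rho(\bS_{\upharpoonright\!\varLambda})\le\gamma^{1/?}$... more carefully: (2) says $\sqrt[\ell]{\rho(A)}\le\gamma^{1/\ell}$ is the wrong reading; rather $\rho(A)\le\gamma$ for $A\in\bS^+_{\upharpoonright\!\varLambda}(\ell)$, $\ell\ge N$, gives $\sup_{A\in\bS^+_{\upharpoonright\!\varLambda}(\ell)}\sqrt[\ell]{\rho(A)}\le\gamma^{1/\ell}\to1$, which is too weak. So the correct extraction from Theorem~A is: by Theorem~A, $\hat\rho(\bS_{\upharpoonright\!\varLambda})=\rho(\bS_{\upharpoonright\!\varLambda})=\limsup_\ell\sup_{i(\cdot)}\sqrt[\ell]{\rho(S_{i_\ell}\cdots S_{i_1})}\le\limsup_\ell\gamma^{1/\ell}=1$. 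That alone gives only $\le1$. The strict inequality must come from combining (1) and (2) together: this is precisely the Brayton--Tong mechanism. I would argue as follows. By (1) the family $\bS^+_{\upharpoonright\!\varLambda}$ is bounded, so its closure $\mathcal{K}$ in $\mathbb{C}^{d\times d}$ is compact; by (2), every element of $\mathcal{K}$ arising from a long enough word has spectral radius $\le\gamma<1$. Then I examine $\hat\rho(\bS_{\upharpoonright\!\varLambda})$: suppose toward contradiction $\hat\rho(\bS_{\upharpoonright\!\varLambda})=1$ (it cannot exceed $1$ by the above). By Corollary~\ref{cor2.7R}/Lemma~\ref{lem2.5R} applied to $f_n(i(\cdot))=\log\|\mathcal{L}(i(\cdot),n)\|$ on the compact invariant set $\varLambda$, there is an ergodic $\mu_*$ and a point $j(\cdot)\in\varLambda$ with $\lim_n\tfrac1n\log\|\mathcal{L}(j(\cdot),n)\|=0$; by Theorem~\ref{thm2.6R} applied to this same $\mu_*$, $\mu_*$-a.e.\ point also has $\limsup_n\tfrac1n\log\rho(\mathcal{L}(\cdot,n))=0$, so there is a point $k(\cdot)\in\varLambda$ with $\limsup_n\tfrac1n\log\rho(S_{k_n}\cdots S_{k_1})=0$, i.e.\ $\limsup_n\sqrt[n]{\rho(S_{k_n}\cdots S_{k_1})}=1$. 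But by (2), $\rho(S_{k_n}\cdots S_{k_1})\le\gamma<1$ for all $n\ge N$, so $\sqrt[n]{\rho(S_{k_n}\cdots S_{k_1})}\le\gamma^{1/n}$, and $\limsup_n\gamma^{1/n}=1$ — again not yet a contradiction by the $n$-th root alone. The resolution: replace $\theta_+$ by $\theta_+^N$ and pass to the cocycle over $N$-blocks, $\tilde{\mathcal{L}}(i(\cdot),m)=\mathcal{L}(i(\cdot),mN)=S_{i_{mN}}\cdots S_{i_1}$, which is generated by matrices in $\bigcup_{N\le\ell<2N}\bS^+_{\upharpoonright\!\varLambda}(\ell)$, all of spectral radius $\le\gamma$; now $\limsup_m\sqrt[m]{\rho(\tilde{\mathcal{L}}(k(\cdot),m))}\le\gamma<1$ strictly, and since by Theorem~\ref{thm2.6R} (over $\theta_+^N$) this $\limsup$ equals $\lim_m\sqrt[m]{\|\tilde{\mathcal{L}}(k(\cdot),m)\|}$ for $\mu_*$-a.e.\ point, we get $\lim_n\tfrac1n\log\|\mathcal{L}(k(\cdot),n)\|\le\tfrac1N\log\gamma<0$, contradicting $\lim_n\tfrac1n\log\|\mathcal{L}(k(\cdot),n)\|=0$ at $\mu_*$-a.e.\ point. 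Hence $\hat\rho(\bS_{\upharpoonright\!\varLambda})<1$. Finally, $\hat\rho(\bS_{\upharpoonright\!\varLambda})<1$ means there are $C\ge1$, $0<\lambda<1$ with $\|S_{i_n}\cdots S_{i_1}\|\le C\lambda^n$ for all $i(\cdot)\in\varLambda$ and $n\ge1$ (from the definition of $\hat\rho$ as a limit, valid by $\theta_+$-invariance), so $S_{i_n}\cdots S_{i_1}\to\mathbf{0}_{d\times d}$ uniformly on $\varLambda$; in particular $\bS$ is $\varLambda$-absolutely asymptotically stable.

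The main obstacle I anticipate is exactly the gap flagged above: conditions (1)+(2) impose a \emph{uniform} spectral-radius bound $\gamma$ on arbitrarily long products but only an unquantified uniform \emph{norm} bound $\beta$, and one must convert ``spectral radius uniformly $<1$'' into ``norm decays exponentially'' — the classical Brayton--Tong phenomenon. The clean device is to work with the induced cocycle over $N$-blocks (i.e.\ over $\theta_+^N$) so that Theorem~\ref{thm2.6R} can be applied with a genuine strict inequality $\gamma<1$ surviving the $m$-th root, rather than the block length $n$ diluting it to $1$. The compactness and $\theta_+$-invariance of $\varLambda$ are used in three essential places: to apply Theorem~\ref{thm2.6R} and Lemma~\ref{lem2.5R} (which need a compact metrizable $\varOmega$ with a continuous measure-preserving map), to guarantee existence of the extremal ergodic measure $\mu_*$, and to ensure $\bigcup_{N\le\ell<2N}\bS^+_{\upharpoonright\!\varLambda}(\ell)$ is relatively compact so that passing to block-cocycles does not leave the setting. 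A secondary, more routine obstacle is the Baire-category step proving (1) in the ``only if'' direction; this is standard once one notes the level sets of the cocycle norm are closed by continuity on the compact set $\varLambda$, but one should be slightly careful that $\varLambda$ itself need not be a Baire space a priori — however, being a closed subset of the Polish space $\varSigma^+_{\!\mathcal{I}}$, it is, so the argument goes through. (An alternative for (1) is to invoke Lemma~\ref{lem3.3}, the Fenichel uniformity theorem, directly.)
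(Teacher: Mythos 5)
Your ``only if'' direction is essentially the paper's (the paper simply cites Lemmas~\ref{lem3.3} and \ref{lem3.2}), although the Baire-category step you sketch for condition (1) does not actually ``promote'' the pointwise bound to a uniform one: Baire only yields some $k$ for which the closed set $\{i(\cdot):\sup_n\|\mathcal{L}(i(\cdot),n)\|_2\le k\}$ has nonempty interior in $\varLambda$, and with no linear structure on $\varLambda$ and no guarantee that every orbit visits that open set, you cannot conclude the bound holds on all of $\varLambda$. Your fallback --- invoking the Fenichel uniformity theorem (Lemma~\ref{lem3.3}) --- is the correct route and is what the paper does.

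The ``if'' direction, however, contains a genuine gap at exactly the point you flagged as the main obstacle. After passing to the $N$-block cocycle $\tilde{\mathcal{L}}(i(\cdot),m)=\mathcal{L}(i(\cdot),mN)$ you assert $\limsup_m\sqrt[m]{\rho(\tilde{\mathcal{L}}(k(\cdot),m))}\le\gamma<1$ because all the block matrices have spectral radius $\le\gamma$. This fails on two counts: the spectral radius is not submultiplicative, so a bound on $\rho$ of each $N$-block says nothing about $\rho$ of their product; and the bound that condition (2) actually gives, namely $\rho(\tilde{\mathcal{L}}(k(\cdot),m))=\rho(\mathcal{L}(k(\cdot),mN))\le\gamma$ for every $m\ge1$, only yields $\sqrt[m]{\rho(\tilde{\mathcal{L}}(k(\cdot),m))}\le\gamma^{1/m}\to1$, so the $m$-th root dilutes the bound to $1$ just as before and no contradiction with $\limsup_m\frac1m\log\rho(\tilde{\mathcal{L}}(k(\cdot),m))=0$ arises. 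A uniform sub-exponential bound $\rho(A)\le\gamma$ on long products is invisible to any argument phrased purely in terms of exponential growth rates (Morris's theorem, Lyapunov exponents), which is why the ergodic machinery cannot close this direction. The paper's proof is entirely different and non-ergodic: assuming $\|S_{i_n}\cdots S_{i_1}\|_2\not\to0$ for some $i(\cdot)\in\varLambda$, it uses the boundedness (1) to extract $C_\ell=S_{i_{j_\ell}}\cdots S_{i_1}\to C\ne\mathbf{0}_{d\times d}$ and, writing $C_{\ell+1}=B_\ell C_\ell$ with $B_\ell\in\bS_{\upharpoonright\!\varLambda}^+$ by the $\theta_+$-invariance of $\varLambda$, a sub-limit $B_{\ell_k}\to B$ with $C=BC$; then $B$ fixes $\mathrm{Im}\,C\ne\{\mathbf{0}\}$ pointwise, so $\rho(B)\ge1$, while continuity of the spectral radius and condition (2) give $\rho(B)\le\gamma<1$ --- a contradiction. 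You would need to replace your block-cocycle argument with something of this compactness/limit-matrix type.
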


The condition (1) in Theorem~\ref{lem3.1} means that
$\bS$ is Lyapunov stable restricted to $\varLambda$.
This theorem is itself very interesting and it is a key step towards
the proof of Theorem~B. Comparing to the case that is free of any switching
constraints, now $\bS_{\upharpoonright\!\varLambda}^+$ is not a
semigroup. This might cause an essential difficulty described as
follows: if $\varLambda=\varSigma_{\!\mathcal{I}}^+$, i.e., free of any switching constraints, then condition (1) above implies that there can be found
a pre-extremal vector norm $|\pmb{|}\cdot\pmb{|}|$ on $\mathbb{C}^d$ for $\bS$ such that $|\pmb{|}A\pmb{|}|\le 1$ for all $A\in\bS_{\upharpoonright\!\varLambda}^+$; But now in our context, this does not need to be true.

We note here that if the joint spectral radius
$\hat{\rho}(\bS_{\upharpoonright\!\varLambda})<1$ then
$\bS$ is obviously
$\varLambda$-absolutely asymptotically stable from Corollary~\ref{cor2.8R}. In fact, there holds the
following stronger result.

\begin{Lemma}\label{lem3.2}
Let $\varLambda$ be a $\theta_+$-invariant compact subset of
$\varSigma_{\!\mathcal{I}}^+$. Then
$\hat{\rho}(\bS_{\upharpoonright\!\varLambda})<1$ if and only
if $\bS$ is ``$\varLambda$-uniformly exponentially stable";
that is, there exists a number $0<\lambda<1$ and an integer $N\ge 1$ such that
\begin{equation*}
\|S_{i_n}\cdots S_{i_1}\|_2\le\lambda^n\quad
\forall i(\cdot)\in\varLambda\textrm{ and }n\ge N.
\end{equation*}
\end{Lemma}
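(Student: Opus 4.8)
The plan is to prove Lemma~\ref{lem3.2} by reducing the quantitative uniform-exponential-stability statement to the inequality $\hat{\rho}(\bS_{\upharpoonright\!\varLambda})<1$ via a single, careful application of the semi-uniform subadditive ergodic theorem (Theorem~\ref{thm2.3R}), exactly as in the proof of Theorem~A but run in the ``easy" direction. First I would fix the norm: replace the operator norm $\|\cdot\|$ used throughout by the Euclidean-induced norm $\|\cdot\|_2$ appearing in the statement; since all norms on $\mathbb{C}^{d\times d}$ are equivalent, $\hat{\rho}(\bS_{\upharpoonright\!\varLambda})$ is unaffected, and I would note that $\hat\rho(\bS_{\upharpoonright\!\varLambda})$ is independent of the chosen norm. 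The $(\Leftarrow)$ direction is immediate: if $\|S_{i_n}\cdots S_{i_1}\|_2\le\lambda^n$ for all $i(\cdot)\in\varLambda$ and $n\ge N$, then taking $n$-th roots and passing to the $\limsup$ over $n$ gives $\hat\rho(\bS_{\upharpoonright\!\varLambda})\le\lambda<1$.

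For the $(\Rightarrow)$ direction I would set $\varOmega=\varLambda$, $T={\theta_+}_{\!\upharpoonright\!\varLambda}$, and $f_n(i(\cdot))=\log\|\mathcal{L}(i(\cdot),n)\|_2=\log\|S_{i_n}\cdots S_{i_1}\|_2$, taking values in $\mathbb{R}\cup\{-\infty\}$. As observed in the proof of Theorem~A, each $f_n$ is continuous on the compact metrizable space $\varLambda$, and the cocycle identity gives $f_{\ell+m}(i(\cdot))\le f_\ell(T^m(i(\cdot)))+f_m(i(\cdot))$, so $\langle f_n\rangle_1^{\infty}$ is a $T$-subadditive sequence of continuous functions. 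By Lemma~\ref{lem2.4R} (or directly by the argument giving $\chi(\langle f_n\rangle_1^\infty)=\max_{\omega}\chi(\omega,\langle f_n\rangle_1^\infty)$), the joint growth rate equals $\log\hat\rho(\bS_{\upharpoonright\!\varLambda})$, and in particular
\begin{equation*}
\lim_{n\to+\infty}\frac{1}{n}\int_{\varLambda}f_n(i(\cdot))\,d\mu(i(\cdot))\le\log\hat\rho(\bS_{\upharpoonright\!\varLambda})<0\qquad\forall\mu\in\mathcal{M}_{\textit{erg}}(\varLambda,T).
\end{equation*}
Now I would apply Theorem~\ref{thm2.3R} with the constant $\pmb{\alpha}$ chosen to be any fixed number strictly between $\log\hat\rho(\bS_{\upharpoonright\!\varLambda})$ and $0$ — say $\pmb{\alpha}=\tfrac12\log\hat\rho(\bS_{\upharpoonright\!\varLambda})$ — which is legitimate because the displayed inequality holds uniformly over all ergodic measures. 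The theorem then yields an integer $N\ge1$ such that $\sup_{i(\cdot)\in\varLambda}\frac{1}{N}f_N(i(\cdot))<\pmb{\alpha}$, and by compactness of $\varLambda$ and continuity of $f_N$ there is a constant $\alpha'<\pmb\alpha<0$ with $\frac{1}{N}f_N(i(\cdot))\le\alpha'$ for every $i(\cdot)\in\varLambda$, i.e.\ $\|S_{i_N}\cdots S_{i_1}\|_2\le e^{N\alpha'}=:\mu_0<1$ for all $i(\cdot)\in\varLambda$.

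It remains to upgrade this ``single window" estimate to the claimed bound $\lambda^n$ for all $n\ge N$. Here I must use $\theta_+$-invariance of $\varLambda$ rather than a semigroup structure: for $i(\cdot)\in\varLambda$, write $n=qN+r$ with $0\le r<N$, and split the product $S_{i_n}\cdots S_{i_1}$ into $q$ consecutive blocks of length $N$ followed by a tail of length $r$. Each length-$N$ block has the form $S_{j_N}\cdots S_{j_1}$ for the shifted signal $\theta_+^{\,kN}(i(\cdot))\in\varLambda$, so by submultiplicativity of $\|\cdot\|_2$ each contributes a factor $\le\mu_0$, while the tail contributes a factor $\le M^r\le M^{N}$ where $M=\max\{1,\sup_{i\in\mathcal I}\|S_i\|_2\}<\infty$ (finite by continuity of $\bS$ on the compact set of first coordinates appearing in $\varLambda$, or one may simply use $\hat\rho$-bounds). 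Thus $\|S_{i_n}\cdots S_{i_1}\|_2\le M^N\mu_0^{\,q}\le C\,\mu_0^{\,n/N}$ with $C=M^N\mu_0^{-1}$, and choosing $\lambda\in(\mu_0^{1/N},1)$ and enlarging $N$ so that $C\,(\mu_0^{1/N}/\lambda)^n\le1$ for all $n\ge N$ gives $\|S_{i_n}\cdots S_{i_1}\|_2\le\lambda^n$ for all $i(\cdot)\in\varLambda$ and $n\ge N$, as required.

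The main obstacle I anticipate is purely the last paragraph: without the semigroup property of $\bS_{\upharpoonright\!\varLambda}^+$ one cannot freely factor an arbitrary long product, so the block decomposition must be organized along the orbit of $i(\cdot)$ under $\theta_+$, and the bookkeeping that converts a clean bound at the single length $N$ into a geometric bound $\lambda^n$ valid for all $n\ge N$ (absorbing the bounded tail and the constant $C$ into a slightly worse base $\lambda$) needs to be done carefully. Everything else — the reduction to the semi-uniform ergodic theorem and the verification of its hypotheses — is a direct transcription of the machinery already assembled for Theorem~A.
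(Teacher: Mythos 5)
Your proof is correct, but it takes a far heavier route than the paper, and the detour buys nothing here. The paper's proof of the forward direction is essentially one line: since $\varLambda$ is $\theta_+$-invariant, the sequence $a_n=\log\bigl(\sup_{i(\cdot)\in\varLambda}\|S_{i_n}\cdots S_{i_1}\|_2\bigr)$ is subadditive, so $\hat{\rho}(\bS_{\upharpoonright\!\varLambda})=\lim_n e^{a_n/n}=\inf_n e^{a_n/n}$; hence for any $\lambda$ with $\hat{\rho}(\bS_{\upharpoonright\!\varLambda})<\lambda<1$ there is $N$ with $\sup_{i(\cdot)\in\varLambda}\|S_{i_n}\cdots S_{i_1}\|_2^{1/n}\le\lambda$ for all $n\ge N$ --- and in fact even the bare definition of $\limsup$ already gives this, with no ergodic input whatsoever. (The converse is the same trivial root-taking in both proofs.) By contrast, you route the forward direction through the semi-uniform subadditive ergodic theorem to extract a single good window length $N$, and then rebuild the bound for all $n\ge N$ by a block decomposition along the $\theta_+$-orbit. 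That decomposition is sound --- the shifted blocks stay in $\varLambda$ by invariance, and your $M$ is finite because the first-coordinate projection of the compact set $\varLambda$ is compact --- but notice that it is exactly the computation that proves subadditivity of $a_n$, i.e.\ you have re-derived by hand the fact that makes the definition-based argument immediate. The ergodic machinery (Theorems~\ref{thm2.2R} and \ref{thm2.3R}) is genuinely needed in the paper only to pass from the generalized to the joint spectral radius in Theorem~A; Lemma~\ref{lem3.2} concerns $\hat{\rho}$ alone, which already encodes the uniform supremum over $\varLambda$. One cosmetic caveat: your choice $\pmb{\alpha}=\tfrac12\log\hat{\rho}(\bS_{\upharpoonright\!\varLambda})$ degenerates when $\hat{\rho}(\bS_{\upharpoonright\!\varLambda})=0$; any fixed negative constant above $\log\hat{\rho}(\bS_{\upharpoonright\!\varLambda})$ serves, as you indicate.
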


\begin{proof}
Let $1>\lambda>\hat{\rho}(\bS_{\upharpoonright\!\varLambda})$. Then from the
definition of $\hat{\rho}(\bS_{\upharpoonright\!\varLambda})$, there is some integer $N\ge 1$
such that
\begin{equation*}
\sup_{i(\cdot)\in\varLambda}\sqrt[n]{\|S_{i_n}\cdots S_{i_1}\|_2}\le\lambda
\quad \forall n\ge N.
\end{equation*}
So, $\bS$ is $\varLambda$-uniformly
exponentially stable. Conversely, if there exists a constant
$0<\lambda<1$ and an integer $N\ge 1$ such that
\begin{equation*}
\|S_{i_n}\cdots S_{i_1}\|_2\le\lambda^n\quad
\forall i(\cdot)\in\varLambda\textrm{ and }n\ge N,
\end{equation*}
then
\begin{equation*}
\hat{\rho}(\bS_{\upharpoonright\!\varLambda})=\inf_{n\ge
1}\left\{\sup_{i(\cdot)\in\varLambda}\sqrt[n]{\|S_{i_n}\cdots S_{i_1}\|_2}\right\}\le\lambda<1,
\end{equation*}
as desired.
This proves Lemma~\ref{lem3.2}.
\end{proof}

At the first glance, $\varLambda$-absolute asymptotic stability is weaker than the $\varLambda$-absolute
exponential stability for the switching system $\bS$. However, they are
equivalent to each other as is shown in the case free of any switching
constraints (cf.~\cite[Theorem~4.1]{DL} and
\cite[Theorem~2.3]{Gur}). In fact, the $\varLambda$-absolute asymptotic stability is equivalent to the $\varLambda$-uniform exponential stability from the Fenichel uniformity theorem~\cite{Fen}, stated as follows:

\begin{Lemma}[See N.~Fenichel~\cite{Fen}]\label{lem3.3}
Let $\varLambda$ be a $\theta_+$-invariant compact subset of $\varSigma_{\!\mathcal{I}}^+$. Then,
$\bS$ is $\varLambda$-absolutely asymptotically stable if and only if it is
$\varLambda$-uniformly exponentially stable.
\end{Lemma}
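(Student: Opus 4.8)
The plan is to prove Lemma~\ref{lem3.3} by establishing the two implications separately, with the nontrivial direction being that $\varLambda$-absolute asymptotic stability forces $\varLambda$-uniform exponential stability. The easy direction is immediate: if $\bS$ is $\varLambda$-uniformly exponentially stable, then for $n\ge N$ and all $i(\cdot)\in\varLambda$ we have $\|S_{i_n}\cdots S_{i_1}\|_2\le\lambda^n\to0$, so certainly $S_{i_n}\cdots S_{i_1}\to\mathbf{0}_{d\times d}$ for every individual $i(\cdot)\in\varLambda$. So the whole content is the forward implication, and the natural route is: (i) use Lemma~\ref{lem3.1} (the Brayton-Tong-type criterion already proved) to extract from $\varLambda$-absolute asymptotic stability the two consequences: boundedness of $\bS_{\upharpoonright\!\varLambda}^+$ and the existence of $\gamma<1$ and $N\ge1$ with $\rho(A)\le\gamma$ for all $A\in\bS_{\upharpoonright\!\varLambda}^+(\ell)$ and all $\ell\ge N$; (ii) observe that the second consequence says exactly $\rho(S_{i_n}\cdots S_{i_1})\le\gamma$ for all $n\ge N$ and all $i(\cdot)\in\varLambda$, hence $\sup_{i(\cdot)\in\varLambda}\sqrt[n]{\rho(S_{i_n}\cdots S_{i_1})}\le\gamma^{1/n}$, and letting $n\to+\infty$ in the definition gives $\rho(\bS_{\upharpoonright\!\varLambda})\le1$; (iii) sharpen this to a strict inequality, and then invoke Lemma~\ref{lem3.2} to conclude $\varLambda$-uniform exponential stability.

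The one point requiring care in step (iii) is passing from $\rho(\bS_{\upharpoonright\!\varLambda})\le1$ to $\rho(\bS_{\upharpoonright\!\varLambda})<1$; the crude bound $\gamma^{1/n}\to1$ only yields $\le1$. The clean fix is to apply Lemma~\ref{lem3.1} after first \emph{rescaling}. Fix $\gamma$ and $N$ as produced by Lemma~\ref{lem3.1}, pick any $\gamma<\tilde\gamma<1$, and set $\widetilde\bS=\tilde\gamma^{-1}\bS$, i.e. $\widetilde S_i=\tilde\gamma^{-1}S_i$. Then $\widetilde\bS$ is still continuous on $\mathcal{I}$, and for $n\ge N$ and $i(\cdot)\in\varLambda$ we have $\rho(\widetilde S_{i_n}\cdots\widetilde S_{i_1})=\tilde\gamma^{-n}\rho(S_{i_n}\cdots S_{i_1})\le(\gamma/\tilde\gamma)^n\to0$; in particular, for $n$ large enough (depending only on $N$, $\gamma$, $\tilde\gamma$, say $n\ge N'$) this is $\le\tfrac12<1$. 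Moreover $\widetilde\bS$ is again $\varLambda$-absolutely asymptotically stable: since $\gamma/\tilde\gamma<1$ we need boundedness of $\widetilde\bS_{\upharpoonright\!\varLambda}^+$, which would again follow from Lemma~\ref{lem3.1} applied to $\widetilde\bS$ — but to invoke Lemma~\ref{lem3.1} we must first know $\widetilde\bS$ is $\varLambda$-absolutely asymptotically stable, which is what we want. To break this apparent circularity, argue directly: for $n\ge N'$, $\rho(\widetilde S_{i_n}\cdots\widetilde S_{i_1})\le\tfrac12$ for all $i(\cdot)\in\varLambda$, hence $\hat\rho(\widetilde\bS_{\upharpoonright\!\varLambda})=\rho(\widetilde\bS_{\upharpoonright\!\varLambda})$ by Theorem~A, and $\rho(\widetilde\bS_{\upharpoonright\!\varLambda})=\limsup_n\sup_{i(\cdot)}\sqrt[n]{\rho(\widetilde S_{i_n}\cdots\widetilde S_{i_1})}\le\limsup_n(1/2)^{1/n}=1$. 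This still only gives $\le1$; but now apply Lemma~\ref{lem3.1} part~(2) \emph{directly to $\widetilde\bS$ using the established strict bound $\rho\le1/2$ together with part~(1)}: once we have $\widetilde\bS_{\upharpoonright\!\varLambda}^+$ bounded and $\rho(A)\le1/2$ on $\widetilde\bS_{\upharpoonright\!\varLambda}^+(\ell)$ for $\ell\ge N'$, Lemma~\ref{lem3.1} gives $\widetilde\bS$ $\varLambda$-absolutely asymptotically stable, hence (original argument on $\widetilde\bS$) we still face the same gap.

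The resolution is simply to avoid this loop by citing Theorem~A in combination with Lemma~\ref{lem3.1} in the right order. Concretely: from $\varLambda$-absolute asymptotic stability of $\bS$, Lemma~\ref{lem3.1} gives boundedness of $\bS_{\upharpoonright\!\varLambda}^+$ and $\gamma<1$, $N\ge1$ with $\rho(S_{i_n}\cdots S_{i_1})\le\gamma$ for all $n\ge N$, $i(\cdot)\in\varLambda$. By Theorem~A, $\hat\rho(\bS_{\upharpoonright\!\varLambda})=\rho(\bS_{\upharpoonright\!\varLambda})$. Now estimate $\rho(\bS_{\upharpoonright\!\varLambda})$ using the \emph{submultiplicativity along $\varLambda$}: for $n=qN+r$ with $0\le r<N$, a product of length $n$ splits (after iterating $\theta_+$-invariance of $\varLambda$, which keeps all the length-$N$ blocks admissible) into $q$ blocks each of spectral radius governed by... — here one uses instead the sharper route through eigenvalues: for $A\in\bS_{\upharpoonright\!\varLambda}^+(n)$ with $n\ge N$, $\rho(A^k)=\rho(A)^k\le\gamma^k$, and $A^k$ need not lie in $\bS_{\upharpoonright\!\varLambda}^+$, so this also fails. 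The genuinely correct and short argument is: $\rho(\bS_{\upharpoonright\!\varLambda})=\hat\rho(\bS_{\upharpoonright\!\varLambda})$ by Theorem~A, and $\hat\rho(\bS_{\upharpoonright\!\varLambda})=\limsup_n\sup_{i(\cdot)\in\varLambda}\|S_{i_n}\cdots S_{i_1}\|_2^{1/n}$; but by Lemma~\ref{lem3.1}\,(1) the numerators are uniformly bounded by $\beta$, so $\sup_{i(\cdot)}\|S_{i_n}\cdots S_{i_1}\|_2^{1/n}\le\beta^{1/n}\to1$, giving $\hat\rho(\bS_{\upharpoonright\!\varLambda})\le1$; to get strictness, note that if $\hat\rho(\bS_{\upharpoonright\!\varLambda})=1$ then by Corollary~\ref{cor2.7R} there is $\pmb{\mu}_*$-a.s. $i(\cdot)\in\varLambda$ and a unit $x_0$ with $\limsup_n\tfrac1n\log\|S_{i_n}\cdots S_{i_1}x_0\|=0$, so $S_{i_n}\cdots S_{i_1}x_0\not\to\mathbf{0}$, contradicting $\varLambda$-absolute asymptotic stability of $\bS$. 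Hence $\hat\rho(\bS_{\upharpoonright\!\varLambda})<1$, and Lemma~\ref{lem3.2} yields the $\varLambda$-uniform exponential stability. I expect the main obstacle to be precisely this last strictness step — ruling out $\hat\rho(\bS_{\upharpoonright\!\varLambda})=1$ — and the cleanest tool for it is the optimization result Corollary~\ref{cor2.7R}, which hands over an explicit optimal pair $(x_0,i(\cdot))$ realizing the joint spectral radius and thereby converts the spectral equality $\hat\rho=1$ into a concrete non-convergent orbit inside $\varLambda$.
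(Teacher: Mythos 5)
The paper itself offers no proof of Lemma~\ref{lem3.3}: it is quoted as the Fenichel uniformity theorem and attributed to \cite{Fen}. Your attempt to supply a proof from the other results of the paper has two genuine defects. First, it is circular. Your step (i) invokes the forward direction of Lemma~\ref{lem3.1} ($\varLambda$-absolute asymptotic stability $\Rightarrow$ boundedness of $\bS_{\upharpoonright\!\varLambda}^+$ and the uniform bound $\rho(A)\le\gamma<1$ for $A\in\bS_{\upharpoonright\!\varLambda}^+(\ell)$, $\ell\ge N$), but in the paper that direction of Lemma~\ref{lem3.1} is deduced \emph{from} Lemmas~\ref{lem3.3} and \ref{lem3.2}. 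More importantly, even ignoring the order of presentation, the passage from the pointwise statement ``$S_{i_n}\cdots S_{i_1}\to\mathbf{0}$ for each $i(\cdot)$'' to any \emph{uniform} bound over $\varLambda$ (uniform boundedness, or a uniform $N$ and $\gamma$) is exactly the content of the uniformity theorem you are trying to prove; you cannot take it as an input.

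Second, your final strictness step is logically wrong. From $\hat{\rho}(\bS_{\upharpoonright\!\varLambda})=1$ and Corollary~\ref{cor2.7R} you obtain a pair $(x_0,i(\cdot))$ with $\limsup_n\frac{1}{n}\log\|S_{i_n}\cdots S_{i_1}x_0\|=0$, and you conclude $S_{i_n}\cdots S_{i_1}x_0\not\to\mathbf{0}$. That implication fails: a sequence decaying like $1/n$ tends to $\mathbf{0}$ yet has Lyapunov exponent $0$. Zero Lyapunov exponent contradicts \emph{exponential} stability (this is precisely Corollary~\ref{cor2.8R}, $(3)\Rightarrow(1)$), but it does not contradict asymptotic stability, which is the only hypothesis available here. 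The genuine proof of Lemma~\ref{lem3.3} needs a compactness argument of a different kind: for each $i(\cdot)\in\varLambda$ choose $n$ with $\|S_{i_n}\cdots S_{i_1}\|_2<1/2$; the sets $U_n=\{i(\cdot):\|S_{i_n}\cdots S_{i_1}\|_2<1/2\}$ are open by continuity, finitely many of them cover the compact set $\varLambda$, and one then concatenates such contracting blocks along each signal, using $\theta_+$-invariance to stay in $\varLambda$ after each shift, to obtain the uniform exponential estimate. None of the machinery of Theorem~A or Corollary~\ref{cor2.7R} substitutes for this covering argument.
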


\begin{rem}
For Lemma~\ref{lem3.3}, the hypothesis that $\varLambda$ is
``compact" is important, as shown by Example~\ref{example1.3} in Section~\ref{sec1.2}.
\end{rem}

Now, we can readily prove Lemma~\ref{lem3.1} using the statements of
Lemmas~\ref{lem3.2} and \ref{lem3.3}.

\begin{proof}[\textbf{Proof of Lemma~\ref{lem3.1}}]
If $\bS$ is $\varLambda$-absolutely asymptotically stable, then from
Lemmas~\ref{lem3.3} and \ref{lem3.2} there follows that
conditions (1) and (2) in
Lemma~\ref{lem3.1} are trivially fulfilled.
Next, let conditions (1) and (2) in Lemma~\ref{lem3.1} both hold. We proceed to prove that
$\bS$ is $\varLambda$-absolutely asymptotically stable.

Assume, by contradiction, that $\bS$
were not $\varLambda$-absolutely asymptotically stable; then one can find some switching signal, say
$i(\cdot)=(i_n)_{n=1}^{+\infty}$, in $\varLambda$ such that
$\|S_{i_n}\cdots S_{i_1}\|_2\not\to 0$ as $n\to\infty$. Using the
boundedness of $\bS_{\upharpoonright\!\varLambda}^+$ in $\mathbb{C}^{d\times d}$, we can pick out an increasing positive integer sequence, say
$\{j_\ell\}_{\ell=1}^{+\infty}$, with $j_\ell\to+\infty$ as $\ell\to+\infty$, such that
\begin{equation*}
C_\ell:=S_{i_{j_\ell}}\cdots S_{i_1}\to C\not=\textbf{0}_{d\times d}\quad \textrm{as }\ell\to\infty.
\end{equation*}
Now, define $B_\ell:=S_{i_{j_{\ell+1}}}\cdots S_{i_{j_\ell+1}}$ and
so $C_{\ell+1}=B_\ell C_\ell$. Since $\theta_+^{j_\ell}(i(\cdot))=i(\cdot+j_\ell)\in\varLambda$, i.e.,
$(i_{n+j_\ell})_{n=1}^{+\infty}$ lies in $\varLambda$, by the
$\theta_+$-invariance of $\varLambda$, one could obtain
$B_\ell\in \bS_{\upharpoonright\!\varLambda}^+$.
Using the boundedness again, we can pick out a subsequence
\begin{equation*}
B_{\ell_k}\to B\in\mathbb{C}^{d\times d}\quad \textrm{as }k\to\infty.
\end{equation*}
Then, $C=BC,\ C\not=\textbf{0}_{d\times d}$, and $\rho(B)=\lim_{k\to\infty}\rho(B_{\ell_k})\le\gamma<1$ by condition (2) of Lemma~\ref{lem3.1}. But
\begin{equation*}
B(\mathrm{Im }C)=\mathrm{Im }C\not=\{\mathbf{0}\},
\end{equation*}
so $B_{\upharpoonright\mathrm{Im }C}$ is the identity. Thus,
$\rho(B)\ge 1$; it is a contradiction to condition (2).

This therefore proves the statement of Lemma~\ref{lem3.1}.
\end{proof}

\subsection{A reduction lemma}%
To prove Theorem~B stated in Section~\ref{sec1.2}, we need an important reduction theorem, which is due to
L.~Elsner~\cite[Lemma~4]{El} and simply proved in X.~Dai~\cite{Dai-JMAA}.

\begin{Lemma}[See \cite{El}]\label{lem3.5}
If $\hat{\rho}(\bS)=1$ and $\bS$ is product unbounded in
$\mathbb{C}^{d\times d}$, then there is a nonsingular $P\in\mathbb{C}^{d\times d}$ and $1\le
d_1<d$ such that
\begin{equation*}
P^{-1}S_i P=\left[\begin{array}{ll}S_i^{(2)}&\clubsuit_i\\
\mathbf{0}_{d_1\times (d-d_1)}&S_i^{(1)}\end{array}\right]\quad\forall  i\in\mathcal{I},
\end{equation*}
where $S_i^{(1)}\in\mathbb{C}^{d_1\times d_1}$.
\end{Lemma}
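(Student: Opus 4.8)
The plan is to reduce the lemma to the purely algebraic fact that such an $\bS$ must be \emph{reducible}, i.e.\ that the $S_i$ share a common invariant subspace $W\subset\mathbb{C}^d$ with $0<\dim W<d$; the triangular block form is then immediate. Indeed, given such a $W$, put $d_1:=d-\dim W\in\{1,\dotsc,d-1\}$, choose a basis $p_1,\dotsc,p_{d-d_1}$ of $W$, extend it to a basis $p_1,\dotsc,p_d$ of $\mathbb{C}^d$, and set $P=[\,p_1\ \cdots\ p_d\,]$. For each $i$ and each $j\le d-d_1$ we have $S_ip_j\in W$, so the $j$-th column of $P^{-1}S_iP$ has vanishing last $d_1$ entries; this is exactly the asserted shape, with $S_i^{(1)}\in\mathbb{C}^{d_1\times d_1}$ the action induced by $S_i$ on $\mathbb{C}^d/W$. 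So the real task is to produce $W$, equivalently to prove the contrapositive of ``$\bS$ irreducible $\Rightarrow$ the product semigroup $\bS^+$ is bounded.''

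Two ingredients feed into this. The first is the elementary estimate $\rho(\bS_w)\le\hat\rho(\bS)^{|w|}=1$ for every finite word $w$, hence $\rho(A)\le1$ for every $A\in\bS^+$; I would derive it from $\rho(\bS_w)^m=\rho(\bS_{w^m})\le\|\bS_{w^m}\|_2$ by extracting the $(m|w|)$-th root and letting $m\to\infty$, using that the limit defining $\hat\rho(\bS)$ exists (subadditivity, as recalled in Section~\ref{sec1.1}). The second is \emph{Burnside's theorem}: if $\bS$ has no nontrivial common invariant subspace, then the linear span of $\bS^+$ in $\mathbb{C}^{d\times d}$ is all of $\mathbb{C}^{d\times d}$.

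Assume now, for a contradiction, that $\bS$ is irreducible, and fix $A_1,\dotsc,A_{d^2}\in\bS^+$ forming a basis of $\mathbb{C}^{d\times d}$. Since the trace pairing $(X,Y)\mapsto\mathrm{tr}(XY)$ is nondegenerate, the linear map $X\mapsto(\mathrm{tr}(XA_j))_{j=1}^{d^2}$ is an isomorphism $\mathbb{C}^{d\times d}\to\mathbb{C}^{d^2}$, so there is a constant $C$ with $\|X\|_2\le C\max_j|\mathrm{tr}(XA_j)|$ for all $X$. For any $A\in\bS^+$ we have $AA_j\in\bS^+$, whence $|\mathrm{tr}(AA_j)|\le d\,\rho(AA_j)\le d$ by the first ingredient, and therefore $\|A\|_2\le Cd$. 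Thus $\bS^+$ would be bounded, contradicting product-unboundedness of $\bS$. Hence $\bS$ is reducible, and the first paragraph yields $P$ and $d_1$.

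The step I expect to be the real obstacle is exactly this passage from ``every product has spectral radius $\le1$'' to ``every product has bounded norm.'' It is false without irreducibility (a single $2\times2$ Jordan block with eigenvalue $1$ already violates it), so irreducibility has to be used in an essential way, and Burnside's theorem is the device converting the available spectral control into norm control. The same point can be packaged as a classical Rota--Strang/Barabanov extremal norm: for irreducible $\bS$ with $\hat\rho(\bS)=1$, the functional $\|x\|_\star:=\sup_{A\in\bS^+\cup\{\mathrm{Id}\}}\|Ax\|$ is a norm with $\|S_ix\|_\star\le\|x\|_\star$, and the only substantive point in that construction is, once more, its finiteness, which the trace bound above establishes.
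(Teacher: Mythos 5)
Your proof is correct, but note that the paper itself offers no proof of Lemma~\ref{lem3.5}: the statement is imported from Elsner \cite{El} (Lemma~4 there), with a pointer to the short proof in \cite{Dai-JMAA}, so the comparison is with those sources rather than with anything in this text. Your reduction of the block form to the existence of a common invariant subspace $W$ with $0<\dim W<d$, and the bookkeeping identifying $d_1=d-\dim W$ and placing the zero block in position $d_1\times(d-d_1)$ with $S_i^{(1)}\in\mathbb{C}^{d_1\times d_1}$, are exactly right. The substantive step --- if $\bS$ is irreducible and every $A\in\bS^+$ satisfies $\rho(A)\le 1$ (which you correctly extract from $\hat\rho(\bS)=1$ by powering up a word), then $\bS^+$ is bounded --- is the classical Burnside-plus-trace argument: the linear span of $\bS^+$ is a nonzero subalgebra with the same invariant subspaces as $\bS$, Burnside forces it to equal $\mathbb{C}^{d\times d}$, and then $\|A\|_2\le C\max_j|\mathrm{tr}(AA_j)|\le Cd$ because $AA_j\in\bS^+$ and $|\mathrm{tr}(B)|\le d\,\rho(B)$. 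This is sound, and it is essentially the mechanism behind the trace characterization of the joint spectral radius in \cite{CZ00}. By contrast, Elsner's original proof is analytic-geometric: one normalizes an unbounded sequence $P_k\in\bS^+$, extracts a nonzero limit $Q=\lim P_k/\|P_k\|_2$ which is singular because $|\det P_k|\le\rho(P_k)^d\le1$ while $\|P_k\|_2\to\infty$, and builds the invariant subspace from such limits; the proof in \cite{Dai-JMAA} instead runs through an extremal semi-norm whose degeneracy subspace is invariant (the packaging you sketch in your closing paragraph). Your route buys an explicit uniform bound $Cd$ on the irreducible semigroup and avoids any compactness or limit extraction, at the price of invoking Burnside's theorem; the cited proofs avoid Burnside but are less quantitative. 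All three are legitimate.
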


Here $\bS$ is said to be product unbounded, if the multiplicative
semigroup $\bS^+$ defined in the manner as in Lemma~\ref{lem3.1} in the case
$\varLambda=\varSigma_{\!\mathcal {I}}^+$ is unbounded in
$\mathbb{C}^{d\times d}$ under an arbitrary induced operator norm.
\subsection{Proof of Theorem~B}%
Let $\varLambda\subset\varSigma_{\!\mathcal {I}}^+$ be an invariant compact
set of the one-sided Markov shift
$\theta_+\colon\varSigma_{\!\mathcal {I}}^+\rightarrow\varSigma_{\!\mathcal {I}}^+$, which
gives rise to the constrained linear switched system
\begin{equation*}
x_n=S_{i_n}\cdots S_{i_1}x_0\qquad (n\ge1,\
x_0\in\mathbb{C}^d,\;i(\cdot)\in\varLambda),
\leqno{\bS_{\upharpoonright\!\varLambda}}
\end{equation*}
where $\bS\colon\mathcal{I}\rightarrow\mathbb{C}^{d\times d};\ i\mapsto S_i$ is as in the assumption of Theorem~B.

We now proceed to prove Theorem~B.

\begin{proof}[\textbf{Proof of Theorem~B}]
Clearly, $\mathrm{(a)}\Rightarrow\mathrm{(b)}$ follows from Lemma~\ref{lem3.3} and Corollary~\ref{cor2.8R}, and
$\mathrm{(b)}\Rightarrow \mathrm{(c)}$ follows from Theorem~A,
Corollary~\ref{cor2.8R} and Lemma~\ref{lem3.1}.

So, to prove Theorem~B, we need to prove only $\mathrm{(c)}\Rightarrow \mathrm{(a)}$. According to the definition
of $\rho(\bS_{\upharpoonright\!\varLambda})$ and from Theorem~A, condition $\mathrm{(c)}$ implies that
\begin{equation*}
\rho(\bS_{\upharpoonright\!\varLambda})=\hat{\rho}(\bS_{\upharpoonright\!\varLambda})\le 1.
\end{equation*}
If $\hat{\rho}(\bS_{\upharpoonright\!\varLambda})<1$, then from Lemma~\ref{lem3.2}, there holds condition $\mathrm{(a)}$. So,
we proceed, by induction on the dimension $d$ of the state-space $\mathbb{C}^d$, to prove $\hat{\rho}(\bS_{\upharpoonright\!\varLambda})<1$.

Note that if $\bS_{\upharpoonright\!\varLambda}^+$, defined
as in Lemma~\ref{lem3.1}, is bounded in $\mathbb{C}^{d\times d}$,
then condition $\mathrm{(a)}$ follows from Lemma~\ref{lem3.1} together with
condition $\mathrm{(c)}$. So, the assertion is true for $d=1$; this is because
$\rho(S_{i_n}\cdots S_{i_1})=\|S_{i_n}\cdots S_{i_1}\|_2=|S_{i_n}|\cdots|S_{i_1}|\le\gamma<1$ for any $n\ge N$,
for any $i(\cdot)\in\varLambda$ in this case.

Let $m\ge 1$ be an arbitrarily given integer. Assume the assertion is true for all dimensions $d\le m$.
We claim that the assertion holds for $d=m+1$.

Suppose, by contradiction, that $\hat{\rho}(\bS_{\upharpoonright\!\varLambda})=1$ for dimension $d=m+1$. If
$\bS_{\upharpoonright\!\varLambda}^+$ is bounded in $\mathbb{C}^{(m+1)\times(m+1)}$, by Lemma~\ref{lem3.1} and
condition $\mathrm{(c)}$, $\bS_{\upharpoonright\!\varLambda}$ is $\varLambda$-absolutely asymptotically stable so that
$\hat{\rho}(\bS_{\upharpoonright\!\varLambda})<1$ from Lemmas~\ref{lem3.3} and
\ref{lem3.2}, a contradiction. Therefore $\bS_{\upharpoonright\!\varLambda}^+$
is unbounded in $\mathbb{C}^{(m+1)\times(m+1)}$ and further $\bS$ is product unbounded in $\mathbb{C}^{(m+1)\times(m+1)}$. Then from
Lemma~\ref{lem3.5}, one can find a nonsingular $P\in\mathbb{C}^{(m+1)\times(m+1)}$ and $1\le n_1\le m$ such that
\begin{equation*}
P^{-1}S_i P=\left[\begin{matrix}S_i^{(2)}&\clubsuit_i\\
\mathbf{0}&S_{i}^{(1)}\end{matrix}\right]\quad \forall i\in\mathcal{I},
\end{equation*}
where $S_i^{(1)}\in\mathbb{C}^{n_1\times n_1}$ and $\mathbf{0}$ is the origin
of $\mathbb{C}^{n_1\times(m+1-n_1)}$. Set
\begin{equation*}
\bS^{(r)}=\left\{S_i^{(r)}\,|\,i\in\mathcal{I}\right\},\quad r=1,2.
\end{equation*}
Then, by condition $\mathrm{(c)}$
\begin{equation*}
\rho(A)\le\gamma<1\quad \forall A\in
  {\bS_{\upharpoonright\!\varLambda}^{(r)}}^+\quad \textrm{for }r=1,2,
\end{equation*}
where ${\bS_{\upharpoonright\!\varLambda}^{(r)}}^+$ is defined similarly to
$\bS_{\upharpoonright\!\varLambda}^+$ based on $\bS_{\upharpoonright\!\varLambda}^{(r)}$. As the
switched systems $\bS_{\upharpoonright\!\varLambda}^{(r)}$ have dimension less than
$m+1$ for $r=1,2$, by the induction assumption and Theorem~A
\begin{equation*}
  \rho\left(\bS_{\upharpoonright\!\varLambda}^{(r)}\right)<1\quad \textrm{for }r=1,2.
\end{equation*}
Therefore
\begin{equation*}
\rho(\bS_{\upharpoonright\!\varLambda})=\max\left\{\rho\left(\bS_{\upharpoonright\!\varLambda}^{(1)}\right),
\rho\left(\bS_{\upharpoonright\!\varLambda}^{(2)}\right)\right\}<1,
\end{equation*}
and $\hat{\rho}(\bS_{\upharpoonright\!\varLambda})<1$ by Theorem~A, contradicting the hypothesis that $\hat{\rho}(\bS_{\upharpoonright\!\varLambda})=1$.

This contradiction shows that $\hat{\rho}(\bS_{\upharpoonright\!\varLambda})<1$, completing the proof of Theorem~B.
\end{proof}

\section{Concluding remarks and further questions}\label{sec4}%

In this paper, using ergodic theory we have studied the relationship of the joint spectral radius and the generalized spectral radius of a
linear switched system obeying some type of switching constraints, and presented several stability criteria.
We now raise some questions to further study.

Theorem~A asserts a Gel'fand-type spectral-radius formula for a linear switched
system obeying some switching constraints. Let
$\varLambda\subsetneq\varSigma_{\!\mathcal {I}}^+$ be an invariant closed
set of the one-sided Markov shift $\theta_+$. Clearly, for any
$i(\cdot)=(i_n)_{n=1}^{+\infty}\in\varLambda$ and any $n\ge1$,
the sub-word $w=(i_1,\ldots,i_n)$ of length $n$ does not need to be extended to a permissive periodic switching signal, i.e., although
\begin{equation*}
(\overset{w}{\overbrace{i_1,\ldots,i_n}},\overset{w}{\overbrace{i_1,\ldots,i_n}},\ldots)\in\varSigma_{\!\mathcal{I}}^+
\end{equation*}
is a periodic point of $\theta_+$, but it need not belong to the
given subset $\varLambda$. For any $n\ge 1$, put
\begin{equation*}
W_{\!\mathrm{per}}^n(\varLambda)=\left\{w=(i_1,\ldots,i_n)\in\mathcal
{I}^n\,|\,(\overbrace{i_1,\ldots,i_n},\overbrace{i_1,\ldots,i_n},\ldots)\in\varLambda\right\},
\end{equation*}
called the set of all $\varLambda$-periodic words of length $n$. It
is natural to ask the following question:

\begin{que}\label{que1}
If the periodical switching signals are dense in $\varLambda$ then, does
there hold the following equality:
\begin{equation*}
\limsup_{n\to+\infty}\left\{\sup_{w\in
W_{\!\mathrm{per}}^n(\varLambda)}\sqrt[n]{\rho(\bS_w)}\right\}=
\limsup_{n\to+\infty}\left\{\sup_{w\in
W_{\!\mathrm{per}}^n(\varLambda)}\sqrt[n]{\|\bS_w\|}\right\}\textrm{?}
\end{equation*}
\end{que}
\noindent Here $\bS_w=S_{i_n}\cdots S_{i_1}$ for any word
$w=(i_1,\ldots,i_n)$ of length $n\ge 1$ as before.

In our proof of Theorem~A, the compactness of $\varLambda$ plays a role. So, we naturally ask the following question:

\begin{que}\label{que2}
If $\varLambda$ is a $\theta_+$-invariant closed subset of
$\varSigma_{\!\mathcal {I}}^+$ not necessarily compact, does the
statement of Theorem~A still hold when
$\bS=\{S_i\}_{i\in\mathcal{I}}$ is bounded in $\mathbb{C}^{d\times d}$?
\end{que}

In the statement of Theorem~B, from the results proved in
Section~\ref{sec3} there can still be deduced without the assumption
$\rho(\bS)=1$ that
$\mathrm{(a)}\Leftrightarrow\mathrm{(b)}\Rightarrow\mathrm{(c)}$.
This assumption imposed there is used in the proof of $\mathrm{(c)}\Rightarrow\mathrm{(a)}$
where we need to employ Lemma~\ref{lem3.5}.

So, we ask the following question:

 \begin{que}\label{que3}
 Does the statement of Theorem~B still hold without the assumption
 $\rho(\bS)=1$?
 \end{que}
\noindent Furthermore, we believe that it is very possible to have a
positive solution to Question~\ref{que3}.

\section*{Acknowledgments}%
The author would like to thank the anonymous reviewers for their insightful comments for
further improving the quality of this paper.





\bibliographystyle{model1a-num-names}
\bibliography{<your-bib-database>}







\end{document}